\newcommand{\reals}{\mathbb{R}}
\newcommand{\complex}{\mathbb{C}}
\newcommand{\bracketb}[1]{\Big[#1\Big]}
\newcommand{\paraa}[1]{\big(#1\big)}
\newcommand{\parab}[1]{\Big(#1\Big)}
\newcommand{\id}{\operatorname{id}}
\newcommand{\spacearound}[1]{\quad#1\quad}
\newcommand{\equivalent}{\spacearound{\Leftrightarrow}}
\renewcommand{\implies}{\spacearound{\Rightarrow}}
\newcommand{\qtext}[1]{\quad\text{#1}\quad}
\newcommand{\qand}{\qtext{and}}
\newtheorem{theorem}{Theorem}[section]
\newtheorem{corollary}[theorem]{Corollary}
\newtheorem{lemma}[theorem]{Lemma}
\newtheorem{proposition}[theorem]{Proposition}
\theoremstyle{definition}
\newtheorem{definition}[theorem]{Definition}
\theoremstyle{remark}
\newtheorem{remark}[theorem]{Remark}
\numberwithin{equation}{section}
\newcommand{\g}{\mathfrak{g}}
\newcommand{\A}{\mathcal{A}}
\newcommand{\eh}{\hat{e}}
\renewcommand{\d}{\partial}
\newcommand{\TSigma}{T\Sigma}
\newcommand{\hz}{h^{0}}
\newcommand{\hh}{\hat{h}}
\newcommand{\hsigma}{h_{\Sigma}}
\renewcommand{\mid}{\mathds{1}}
\renewcommand{\Im}{\operatorname{Im}}
\newcommand{\Gammat}{\tilde{\Gamma}}
\newcommand{\Wh}{\mathcal{W}_\hbar}
\newcommand{\Fh}{\mathfrak{F}_\hbar}
\newcommand{\E}{\mathcal{E}}
\newcommand{\F}{\mathcal{F}}
\newcommand{\Phib}{\bar{\Phi}}
\newcommand{\db}{\bar{\d}}
\newcommand{\htilde}{\tilde{h}}
\newcommand{\Der}{\operatorname{Der}}
\newcommand{\Stwoh}{S^2_{\hbar}}
\newcommand{\TStwoh}{TS^2_{\hbar}}
\newcommand{\eps}{\varepsilon}
\newcommand{\et}{\tilde{e}}
\newcommand{\Nh}{\mathcal{N}_\hbar}
\newcommand{\nablat}{\tilde{\nabla}}
\newcommand{\ph}{\hat{p}}
\newcommand{\gammat}{\tilde{\gamma}}
\newcommand{\xv}{\vec{x}}
\newcommand{\tr}{\operatorname{tr}}
\title[Levi-Civita connections for noncommutative minimal surfaces]{Levi-Civita connections for a class of noncommutative minimal
  surfaces}
\author{Joakim Arnlind}
\address[Joakim Arnlind]{Dept. of Math.\\
Link\"oping University\\
581 83 Link\"oping\\
Sweden}
\email{joakim.arnlind@liu.se}
\subjclass[2000]{}
\keywords{}
\begin{document}

\maketitle

\begin{abstract}
  We study connections on hermitian modules, and show that metric
  connections exist on regular hermitian modules; i.e finitely
  generated projective modules together with a non-singular hermitian
  form. In addition, we develop an index calculus for such modules, and
  provide a characterization in terms of the existence of a
  pseudo-inverse of the matrix representing the hermitian form with
  respect to a set of generators. As a first illustration of the above
  concepts, we find metric connections on the fuzzy sphere. Finally,
  the framework is applied to a class of noncommutative minimal
  surfaces, for which there is a natural concept of torsion, and we
  prove that there exist metric and torsion free connections for
  every minimal surface in this class.
\end{abstract}

\section{Introduction}

\noindent
In the last decade, a lot of progress has been made in understand the
Riemannian aspects of noncommutative geometry. Both from the viewpoint
of spectral triples (see
e.g. \cite{ct:gaussBonnet,fk:scalarCurvature}) and quantum groups (see
e.g. \cite{bm:starCompatibleConnections,mw:quantum.koszul,bgl:lc.nc.differential.calculi})
as well as from a derivation based approach (see
e.g. \cite{ac:ncgravitysolutions,r:leviCivita,aw:cgb.sphere,aw:curvature.three.sphere,a:lc.braided}).

A classical subject in Riemannian geometry is the theory of minimal
submanifolds; i.e. embeddings into a Riemannian manifold such that the
area functional of the embedded manifold has a stationary point (with
respect to the induced metric).  In particular, the theory of minimal
surfaces is a fascinating subject that is, after more than a hundred
years, still an active field of research. It is interesting to ask if
there is a non-trivial theory of minimal submanifolds in
noncommutative geometry. This question has been studied from several
different perspectives (see e.g.
\cite{mr:nc.sigma.model,dll:sigma.model.solitons,l:twisted.sigma.model}).
In the context of matrix models, related to string and membrane
theories, one is interested in finding noncommutative minimal surfaces
as solutions to certain equations that are noncommutative analogues of
equations for harmonic maps (see
e.g. \cite{h:phdthesis,ct:holomorphic.curves.matrices,aht:spinning}). However,
finding explicit examples seems to be a notoriously hard problem,
although progress has been made (see
e.g. \cite{ah:dmsa,ah:quantizedMinimal,ach:nms,s:fuzzy.construction.kit,ss:covariant.spheres,ah:catenoid,ahk:qms,atn:minimal.embeddings}).

In \cite{ach:nms} a noncommutative analogue of minimal surfaces in
$\reals^n$ was developed and it was shown that one can explicitly
generate infinite classes of noncommutative minimal
surfaces. Moreover, one can a Weierstrass theorem completely
characterizing all noncommutative minimal surfaces in $\reals^3$.  However,
the Riemannian aspects of these surfaces were not studied in detail.
In this note, we would like to study the existence of Levi-Civita
connections (i.e. metric and torsion free connections) on a class of
noncommutative minimal surfaces, as defined in \cite{ach:nms}, built
from the Weyl algebra together with a choice of algebra elements
representing the embedding coordinates. To achieve this, we start by
studying connections on projective modules that are equipped with a
non-singular hermitian form, so called \emph{regular hermitian
  modules} (cf. Definition~\ref{def:regular.hermitian.module}). We
show that on every such module, there exist connections that are
compatible with the hermitian form
(cf. Theorem~\ref{thm:M.projective.embedded}), and explicit
expressions for such connections are derived. Moreover. we
characterize regular hermitian modules in terms of the existence of a
(pseudo) inverse to the matrix representing a hermitian form with
respect to a set of generators
(cf. Proposition~\ref{prop:huab.equiv.projective}), providing a
convenient way of doing ``index'' computations in hermitian modules.
As an illustration of these concepts, we construct metric connections
on the fuzzy sphere in Section~\ref{sec:fuzzy.sphere}.

Finally, we recall the concept of a noncommutative minimal surface
from \cite{ach:nms} and show that they generate regular hermitian
modules for which there is a natural concept of torsion, and prove
that every regular noncommutative minimal surface has a metric and
torsion free connection.

\section{Hermitian modules}

\noindent
In this section, we will be concerned with hermitian modules; i.e
modules that are equipped with an algebra valued bilinear (hermitian)
form.  In particular, we will study \emph{regular hermitian modules},
defined as (finitely generated) projective modules together with an
invertible hermitian form. In a geometric setting, they correspond to
vector bundles with a metric structure.

In what follows, we assume $\A$ to be a unital $\ast$-algebra over
$\complex$. A left (right) $\A$-module $M$ has a canonical structure
of a right (left) $\A$-module, by defining $m\cdot a=a^\ast\cdot m$
(resp. $a\cdot m=m\cdot a^\ast$). Note that, in general, this does not
provide a bimodule structure on $M$.  For a right (left) $\A$-module
$M$ we denote the dual module by $M^\ast$. The dual module is
canonically a left (right) module by setting
\begin{align*}
  (a\cdot\omega)(m) = a\omega(m)\qquad\paraa{\text{resp. }
  (\omega\cdot a)(m) = \omega(m)a}
\end{align*}
for $\omega\in M^\ast$, $m\in M$ and $a\in\A$. In the following, we
shall also consider $M^\ast$ as a right (left) module by the previous
construction; i.e. $(\omega\cdot a)(m)=a^\ast\omega(m)$
(resp. $(a\cdot\omega)(m)=\omega(m)a^\ast$). Let us now recall the
definition of a hermitian form.

\begin{definition}
  Let $\A$ be a $\ast$-algebra and let $M$ be a right (left)
  $A$-module. A \emph{hermitian form on $M$} is a map
  $h:M\times M\to\A$ satisfying
  \begin{enumerate}
  \item $h(m_1,m_2)^\ast = h(m_2,m_1)$
  \item $h(m_1,m_2a)=h(m_1,m_2)a$ \quad(resp. $h(am_1,m_2)=ah(m_1,m_2)$)
  \item $h(m_1+m_2,m)=h(m_1,m)+h(m_2,m)$
  \end{enumerate}
  for all $m,m_1,m_2\in M$ and $a\in\A$.
\end{definition}

\noindent
For instance, on a free right $\A$-module $\A^n$ with basis $\{\eh_i\}_{i=1}^n$, a hermitian
form is determined by $h_{ij}\in\A$ (for $i,j=1,\ldots,n$) as
\begin{align*}
  h(\eh_iU^i,\eh_iV^i) = (U^i)^\ast h_{ij}V^j.
\end{align*}
Here, and in the following, we adopt the convention that repeated
indices are summed over an appropriate range; e.g.
\begin{align*}
  \eh_iU^i = \sum_{i=1}^n\eh_iU^i\qand
  (U^i)^\ast h_{ij}V^j = \sum_{i,j=1}^n(U^i)^\ast h_{ij}V^j.
\end{align*}
To a hermitian form $h$ on a right (left) $\A$-module $M$, one
associates the map $\hh:M\to M^\ast$ given by
\begin{align}
  \hh(m_1)(m_2) = h(m_1,m_2)\qquad
  \paraa{\text{resp. } \hh(m_1)(m_2)=h(m_2,m_1)},
\end{align}
and it follows immediately that $\hh$ satisfies
\begin{align}
  &\hh(m_1+m_2) = \hh(m_1)+\hh(m_2)\\
  &\hh(m_1a) = a^\ast\hh(m_1)\qquad
    \paraa{\text{resp. }\hh(am_1)=\hh(m_1)a^\ast}.
\end{align}
Thus, considering $M^\ast$ as a right (left) module, $\hh$ is a
homomorphism of right (left) modules.

\begin{definition}
  A hermitian form $h$ on the $\A$-module $M$ is called
  \emph{invertible} if $\hh:M\to M^\ast$ is a bijection, and one
  defines $h^{-1}:M^\ast\times M^\ast\to\A$ as
  \begin{align*}
   h^{-1}(\omega_1,\omega_2)=\omega_1\paraa{\hh^{-1}(\omega_2)}. 
  \end{align*}
\end{definition}

\begin{proposition}\label{prop:hinv.hermitian.form}
  If $h$ is an invertible hermitian form on the right $\A$-module $M$
  then $h^{-1}$ is a hermitian form on the left $\A$-module $M^\ast$.
\end{proposition}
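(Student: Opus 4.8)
The plan is to verify directly that $h^{-1}$, as defined, satisfies the three axioms of a hermitian form, treating $M^\ast$ as a \emph{left} $\A$-module. Since $h$ is invertible, $\hh:M\to M^\ast$ is a bijection, so $\hh^{-1}$ is well-defined, and because $\hh$ is a homomorphism of right modules satisfying $\hh(m_1 a)=a^\ast\hh(m_1)$, its inverse will satisfy a dual compatibility relation that I would extract first. Concretely, I would begin by recording the identities governing $\hh^{-1}$: writing $m=\hh^{-1}(\omega)$, the relation $\hh(ma)=a^\ast\hh(m)$ rearranges to $\hh^{-1}(a^\ast\omega)=\hh^{-1}(\omega)\cdot a$, and additivity of $\hh$ transfers to additivity of $\hh^{-1}$. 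These two facts about $\hh^{-1}$ are the engine for everything that follows.

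Next I would check the three axioms in turn. For additivity (axiom 3), the definition $h^{-1}(\omega_1,\omega_2)=\omega_1\paraa{\hh^{-1}(\omega_2)}$ is additive in $\omega_1$ because $\omega_1$ is an element of $M^\ast$ (hence an additive map) and additive in the first slot trivially; additivity in the second slot follows from additivity of $\hh^{-1}$ established above. For the module-linearity axiom (axiom 2 in the left-module form, $h^{-1}(a\omega_1,\omega_2)=a\,h^{-1}(\omega_1,\omega_2)$), I would expand the left side as $(a\cdot\omega_1)\paraa{\hh^{-1}(\omega_2)}=a\,\omega_1\paraa{\hh^{-1}(\omega_2)}$, using the defining left-module action $(a\cdot\omega)(m)=a\omega(m)$ on the dual module stated in the excerpt; this lands immediately on the right side.

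The main obstacle, and the step requiring the most care, is the hermitian symmetry axiom (axiom 1): one must show $h^{-1}(\omega_1,\omega_2)^\ast=h^{-1}(\omega_2,\omega_1)$. The strategy is to parametrize $\omega_1=\hh(m_1)$ and $\omega_2=\hh(m_2)$ using surjectivity of $\hh$, so that $\hh^{-1}(\omega_2)=m_2$ and $\hh^{-1}(\omega_1)=m_1$. Then $h^{-1}(\omega_1,\omega_2)=\omega_1(m_2)=\hh(m_1)(m_2)=h(m_1,m_2)$, and likewise $h^{-1}(\omega_2,\omega_1)=h(m_2,m_1)$. The symmetry of $h^{-1}$ now reduces to the symmetry of $h$, namely $h(m_1,m_2)^\ast=h(m_2,m_1)$, which holds by hypothesis. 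I would finish by remarking that the left-module version follows by the entirely parallel argument, switching the roles of the two arguments as dictated by the parenthetical conventions fixed earlier in the excerpt. The only delicate point throughout is keeping track of which variable carries the $\ast$ and whether one is working on the left or right, so I would fix notation for the right-module case at the outset and treat the left-module case as a mirror image.
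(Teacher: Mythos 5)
Your proof is correct and takes essentially the same approach as the paper's: a direct verification of the three axioms, with the module-linearity axiom following from the left action on $M^\ast$ and the hermitian symmetry handled exactly as in the paper by writing $\omega_1=\hh(m_1)$, $\omega_2=\hh(m_2)$ and reducing to the symmetry of $h$. The preliminary identities you record for $\hh^{-1}$ are harmless extra bookkeeping that the paper simply uses implicitly.
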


\begin{proof}
  First of all, it is clear that $h^{-1}$ is linear in its first
  argument. Secondly,
  \begin{align*}
    h^{-1}(a\omega_1,\omega_2) = (a\omega_1)\paraa{\hh^{-1}(\omega_2)}
    =a\omega_1\paraa{\hh^{-1}(\omega_2)} = ah^{-1}(\omega_1,\omega_2).
  \end{align*}
  Furthermore, writing $\omega_1=\hh(m_1)$ and $\omega_2=\hh(m_2)$ one finds that
  \begin{align*}
    h^{-1}(\omega_1,\omega_2)^{\ast} 
    &= h^{-1}\paraa{\hh(m_1),\hh(m_2)}^\ast =
      \paraa{\hh(m_1)(m_2)}^\ast = h(m_1,m_2)^\ast\\
    &= h(m_2,m_1) = \hh(m_2)(m_1) =
      h^{-1}\paraa{\hh(m_2),\hh(m_1)} = h^{-1}(\omega_2,\omega_1).
  \end{align*}
  We conclude that $h^{-1}$ is a hermitian form on the left
  $\A$-module $M$. 
\end{proof}

\noindent
It follows directly from Proposition~\ref{prop:hinv.hermitian.form}
that $h^{-1}$ is also a hermitian form on the \emph{right} $\A$-module
$M^\ast$; namely
\begin{align*}
  h^{-1}(\omega,\eta\cdot a) = h^{-1}(a^\ast\eta,\omega)^\ast
  = \paraa{a^\ast h^{-1}(\eta,\omega)}^\ast = h^{-1}(\omega,\eta)a.
\end{align*}
Next, let us recall the definition of a hermitian module. 

\begin{definition}
  Let $M$ be a right (left) $\A$-module and let $h$ be a hermitian
  form on $M$. The pair $(M,h)$ is called a \emph{right (left)
    hermitian module}. Moreover, a homomorphism of right (left)
  hermitian modules $\phi:(M,h)\to(M',h')$ is a homomorphism of right
  (left) modules $\phi:M\to M'$ such that
  $h(m_1,m_2)=h'(\phi(m_1),\phi(m_2))$ for all $m_1,m_2\in
  M$. Moreover, if $\phi$ is a module isomorphism then $\phi$ is
  called an \emph{isometry} and $(M,h)$ and $(M',h')$ are said to be
  \emph{isometric} and we write $(M,h)\simeq (M',h')$.
\end{definition}

\noindent
The next definition introduces a nice class of hermitian modules where
the module is assumed to be a (finitely generated) projective module,
and the hermitian form is assumed to be invertible. They correspond in
a stronger sense to classical vector bundles with metrics.

\begin{definition}\label{def:regular.hermitian.module}
  If $h$ is an invertible hermitian form on a finitely generated
  projective $\A$-module $M$, then $(M,h)$ is called a \emph{regular
    hermitian $\A$-module}.
\end{definition}

\noindent
In Riemannian geometry (and in differential geometry in general),
computations are often done in local coordinates, and an immediate
consequence of the non-degeneracy of a Riemannian metric $g$ is that
there exists an inverse $g^{ab}$ (at each point) such that
$g^{ab}g_{bc}=\delta^a_c$, where $g_{ab}=g(\d_a,\d_b)$ and
$\{\d_a\}_{a=1}^n$ is a local basis of the tangent space. In general,
a global inverse does not exist, unless the tangent bundle is
trivial. However, the next result shows that, in the algebraic setting, there
exists an inverse relative to a set of generators if and only if the
hermitian module is regular.

\begin{proposition}\label{prop:huab.equiv.projective}
  Let $(M,h)$ be a right hermitian $\A$-module with generators
  $\{e_a\}_{a=1}^n$, and set $h_{ab}=h(e_a,e_b)$. Then $(M,h)$ is a
  regular hermitian module if and only if there exist $h^{ab}\in\A$
  (for $a,b=1,\ldots,n$) such that $(h^{ab})^\ast=h^{ba}$ and $e_rh^{rs}h_{sc}=e_c$ for
  $a,b,c=1,\ldots,n$.
\end{proposition}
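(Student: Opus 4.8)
The plan is to prove the two implications separately, in each case translating between the ``global'' data (projectivity of $M$ and bijectivity of $\hh$) and the ``index'' data $h^{ab}$ by means of the canonical surjection $\pi:\A^n\to M$ with $\pi(\eh_a)=e_a$, whose existence reflects the hypothesis that $\{e_a\}_{a=1}^n$ generate $M$.

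For the direction $(\Leftarrow)$, suppose the $h^{ab}$ are given. I would first realize $M$ as a direct summand of $\A^n$ by exhibiting an explicit splitting of $\pi$. The natural candidate is $\sigma:M\to\A^n$, $\sigma(m)=\eh_r h^{rs}h(e_s,m)$; this is manifestly a homomorphism of right modules, and the hypothesis $e_rh^{rs}h_{sc}=e_c$ gives $\pi\sigma=\id_M$ after expanding an arbitrary $m$ as $e_c\mu^c$. Hence $\pi$ splits and $M$ is finitely generated projective. To see that $h$ is invertible I would write down the candidate inverse of $\hh$ directly, namely $g:M^\ast\to M$ with $g(\omega)=e_r h^{rs}\omega(e_s)^\ast$, and check both composites: on the one hand $g(\hh(m))=e_rh^{rs}h(e_s,m)=m$, using $h(m,e_s)^\ast=h(e_s,m)$ together with the defining relation; on the other hand $\hh(g(\omega))=\omega$ follows by evaluating on a general $m=e_c\mu^c$ and invoking the consequence $\omega(e_r)h^{rs}h_{sc}=\omega(e_c)$, obtained by applying $\omega$ to $e_rh^{rs}h_{sc}=e_c$. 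The symmetry $(h^{ab})^\ast=h^{ba}$ enters precisely when moving $\ast$ past $h^{rs}$ in these verifications.

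For the direction $(\Rightarrow)$, suppose $(M,h)$ is regular. Since $M$ is projective, the surjection $\pi$ splits, so there is a right-module homomorphism $\sigma:M\to\A^n$ with $\pi\sigma=\id_M$; writing $\sigma(m)=\eh_a\sigma^a(m)$ produces coordinate functionals $\sigma^a\in M^\ast$ obeying the completeness identity $e_a\sigma^a(m)=m$. The key idea is to define the index-raised metric through the inverse hermitian form rather than directly from $\sigma$: I would set $h^{ab}=h^{-1}(\sigma^a,\sigma^b)$. Then the hermitian symmetry $(h^{ab})^\ast=h^{ba}$ is immediate from Proposition~\ref{prop:hinv.hermitian.form}, and this is exactly where routing through $h^{-1}$ pays off, since an arbitrary section need not yield a symmetric matrix.

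It then remains to verify $e_rh^{rs}h_{sc}=e_c$. Writing $\xi^s=\hh^{-1}(\sigma^s)$, so that $h^{rs}=\sigma^r(\xi^s)$ and $\sigma^s(m)=h(\xi^s,m)$, the completeness identity applied to $\xi^s$ gives $e_rh^{rs}=\xi^s$. Substituting this back yields $\sigma^s(m)=h(e_rh^{rs},m)=h^{sr}h(e_r,m)$, and feeding this expression into $e_s\sigma^s(m)=m$ with $m=e_c$ produces the desired relation after relabelling indices. I expect the main obstacle to be securing the hermitian symmetry of $h^{ab}$ cleanly; this is what motivates defining $h^{ab}$ through $h^{-1}$ and extracting the intermediate identities $e_rh^{rs}=\xi^s$ and $\sigma^s(m)=h^{sr}h(e_r,m)$ before invoking completeness, the rest being routine index bookkeeping.
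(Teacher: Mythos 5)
Your proposal is correct and takes essentially the same approach as the paper: your splitting map $\sigma(m)=\eh_rh^{rs}h(e_s,m)$ is the paper's lifting homomorphism specialized to the canonical surjection $\A^n\to M$, your two-sided inverse $g(\omega)=e_rh^{rs}\omega(e_s)^\ast$ is precisely the element the paper exhibits when checking injectivity and surjectivity of $\hh$, and in the converse direction both arguments define $h^{ab}=h^{-1}(f^a,f^b)$ from a dual basis and verify $e_rh^{rs}h_{sc}=e_c$ by the same manipulation. The only differences are organizational (packaging bijectivity as a two-sided inverse rather than separate injectivity and surjectivity checks, and using the direct-summand characterization of projectivity rather than the lifting property).
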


\begin{proof}
  Assume that there exists $h^{ab}$ such that $(h^{ab})^\ast=h^{ba}$
  and $e_ah^{ab}h_{bc}=e_c$. Let us start by proving that $M$ is
  projective by showing that for any surjective homomorphism
  $\phi:N\to M$ there exists a homomorphism $\psi:M\to N$ such that
  $\phi\circ\psi=\id_M$. Thus, if $\phi:N\to M$ is a surjective
  homomorphism then there exists $f_a\in N$ such that $\phi(f_a)=e_a$
  for $a=1,\ldots,n$. Define $\psi:M\to N$ as
  \begin{align*}
    \psi(m) = f_ah^{ab}h(e_b,m),
  \end{align*}
  which is clearly a homomorphism, and it is easy
  to check that (with $m=e_cm^c$)
  \begin{align*}
    \phi(\psi(m)) = \phi(f_a)h^{ab}h(e_b,e_c)m^c
    = e_ah^{ab}h_{bc}m^c=e_cm^c=m,
  \end{align*}
  by using that $e_ah^{ab}h_{bc}=e_c$. Hence, $M$ is a finitely
  generated projective module.
  
  Next, let us show that $h$ is invertible, and we start by showing
  that $\hh$ is injective.  To this end, assume that $\hh(m)=0$, which
  is equivalent to $h(n,m)=0$ for all $n\in M$. Choosing
  $n=e_a(h^{ca})^\ast$ one obtains
  \begin{align*}
    &h^{ca}h(e_a,e_b)m^b = 0\implies
    h^{ca}h_{ab}m^b = 0\implies
      e_ch^{ca}h_{ab}m^b = 0\implies\\
    &e_bm^b = 0\implies m=0
  \end{align*}
  by using that $e_ch^{ca}h_{ab}=e_b$. Hence, $\hh$ is injective. Let
  us now show that $\hh$ is surjective. Let $\omega\in M^\ast$ be
  arbitrary. Choosing $m=e_a(\omega(e_c)h^{ca})^\ast$ one obtains
  \begin{align*}
    \hh(m)(e_bn^b)
    &=\omega(e_c)h^{ca}h\paraa{e_a,e_b}n^b
      = \omega(e_c)h^{ca}h_{ab}n^b
      = \omega(e_ch^{ca}h_{ab})n^b\\
    &=\omega(e_b)n^b=\omega(e_bn^b)
  \end{align*}
  showing that $\hh(m)=\omega$. We conclude that $\hh$ is surjective,
  implying that $h$ is invertible. This completes the proof that if
  such an $h^{ab}$ exists then $M$ is a finitely generated projective
  module and $h$ is invertible.

  Conversely, assume that $M$ is a finitely generated projective
  module and assume that $h$ is invertible.  Since $M$ is a finitely
  generated projective module, there exists a dual basis
  $\{f_a\}_{a=1}^n\subseteq M^\ast$ such that $m = e_af^a(m)$ for all
  $m\in M$, and
  \begin{align*}
    (h_{ab}f^b)(m) = h(e_a,e_b)f^b(m) = h(e_a,e_bf^b(m))
    = h(e_a,m) = \hh(e_a)(m)
  \end{align*}
  shows that $\hh(e_a)=h_{ab}f^b$. Next, let $h^{ab}=h^{-1}(f^a,f^b)$,
  satisfying $(h^{ab})^\ast=h^{ba}$, and compute
  \begin{align*}
    e_ah^{ab}h_{bc}
    &= e_ah^{-1}(f^a,f^b)h_{bc}
      = e_af^a\paraa{\hh^{-1}(f^b)}h_{bc}
      = e_af^a\paraa{\hh^{-1}(f^b)h_{bc}}\\
    &= e_af^a\paraa{\hh^{-1}(h_{cb}f^b)}
      =e_af^a\paraa{\hh^{-1}(\hh(e_c))}
      =e_af^a(e_c) = e_c,
  \end{align*}
  which concludes the proof.  
\end{proof}

\noindent
It follows directly from Proposition~\ref{prop:huab.equiv.projective}
that a global inverse exists precisely when the module is free.

\begin{corollary}
  Let $(M,h)$ be a right hermitian $\A$-module with generators
  $\{e_a\}_{a=1}^n$, and set $h_{ab}=h(e_a,e_b)$. Then $(M,h)$ is a
  regular hermitian module and $\{e_a\}_{a=1}^n$ is a basis of $M$ if and only
  if there exist $h^{ab}\in\A$ (for $a,b=1,\ldots,n$) such that
  $(h^{ab})^\ast=h^{ba}$ and $h^{ap}h_{pc}=\delta^a_c\mid$ for
  $a,b,c=1,\ldots,n$.
\end{corollary}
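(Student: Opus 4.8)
The plan is to bootstrap from Proposition~\ref{prop:huab.equiv.projective}: the corollary differs only in that the generators $\{e_a\}_{a=1}^n$ are now required to form a \emph{basis}, and correspondingly the relation $e_rh^{rs}h_{sc}=e_c$ is sharpened to the purely matrix-level identity $h^{ap}h_{pc}=\delta^a_c\mid$. So in each direction I would isolate the ``regularity'' content, which Proposition~\ref{prop:huab.equiv.projective} already supplies, from the extra ``freeness'' content that the basis assumption encodes.

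For the direction assuming the existence of such $h^{ab}$, I would first multiply the identity $h^{ap}h_{pc}=\delta^a_c\mid$ on the left by $e_a$ to obtain $e_ah^{ap}h_{pc}=e_c$, which is precisely the hypothesis of Proposition~\ref{prop:huab.equiv.projective}; hence $(M,h)$ is already a regular hermitian module. It then remains to prove that $\{e_a\}$ is a basis, i.e. that the generators are linearly independent. I would assume $e_am^a=0$ and apply $h(e_b,\cdot)$ to get $h_{ba}m^a=h(e_b,e_am^a)=0$ for every $b$; contracting with $h^{cb}$ and using that $h^{cb}h_{ba}=\delta^c_a\mid$ (the same identity after relabelling indices) yields $m^c=0$. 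Since a linearly independent generating set is a basis, this completes the direction.

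For the converse, assume $(M,h)$ is regular and that $\{e_a\}$ is a basis. Then there is a dual basis $\{f^a\}_{a=1}^n\subseteq M^\ast$ with $f^a(e_c)=\delta^a_c\mid$ and $m=e_af^a(m)$, and exactly as in the proof of Proposition~\ref{prop:huab.equiv.projective} one has $\hh(e_c)=h_{cb}f^b$. Setting $h^{ab}=h^{-1}(f^a,f^b)$ gives $(h^{ab})^\ast=h^{ba}$ immediately from the hermiticity of $h^{-1}$ (Proposition~\ref{prop:hinv.hermitian.form}). The new point is to compute $h^{ap}h_{pc}$ itself rather than $e_ah^{ap}h_{pc}$: writing $h^{ap}=f^a\paraa{\hh^{-1}(f^p)}$ and using that $f^a$ and $\hh^{-1}$ are right-module homomorphisms, I would transport $h_{pc}$ inside to get $h^{ap}h_{pc}=f^a\paraa{\hh^{-1}(f^p)h_{pc}}=f^a\paraa{\hh^{-1}(h_{cp}f^p)}=f^a\paraa{\hh^{-1}(\hh(e_c))}=f^a(e_c)$, which equals $\delta^a_c\mid$ precisely because $\{e_a\}$ is a basis.

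The delicate points live in the bookkeeping of left versus right module actions when moving the scalar $h_{pc}$ through $\hh^{-1}$: one needs that, in the right-module structure on $M^\ast$, $\hh$ (hence $\hh^{-1}$) is a right-module homomorphism, together with the conjugation $h_{pc}^\ast=h_{cp}$, in order to identify $\hh^{-1}(f^p)h_{pc}$ with $e_c$. I expect the main obstacle to be the freeness argument in the first direction, namely verifying cleanly that the matrix identity $h^{ap}h_{pc}=\delta^a_c\mid$ genuinely forces linear independence of the generators; the rest is a direct refinement of the proposition's computation with ``generators'' replaced by ``basis.''
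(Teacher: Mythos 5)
Your proposal is correct and takes essentially the same approach as the paper. The direction starting from the matrix identity is the paper's argument verbatim (contract with $e_a$, invoke Proposition~\ref{prop:huab.equiv.projective}, then prove linear independence by hitting $h_{ba}m^a=0$ with $h^{cb}$), and in the converse direction your inlined dual-basis computation $h^{ap}h_{pc}=f^a\paraa{\hh^{-1}(h_{cp}f^p)}=f^a(e_c)=\delta^a_c\mid$ is just the proposition's own construction carried out one step further; the paper reaches the same conclusion slightly faster by citing the proposition to get $e_ah^{ab}h_{bc}=e_c$ and then stripping off $e_a$ by uniqueness of coefficients with respect to a basis.
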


\begin{proof}
  First, assume that $\{e_a\}_{a=1}^n$ is a basis of $M$ and that $h$
  is invertible. In particular, $M$ is a projective module, and
  Proposition~\ref{prop:huab.equiv.projective} implies that there
  exists $h^{ab}$ such that $(h^{ab})^\ast=h^{ba}$ and
  $e_ah^{ab}h_{bc}=e_c$. Since $\{e_a\}_{a=1}^n$ is a basis of $M$, it
  follows that $h^{ab}h_{bc}=\delta^a_c\mid$. Conversely, assume that
  $(h^{ab})^\ast=h^{ba}$ and $h^{ab}h_{bc}=\delta^a_c\mid$, which in
  particular implies that $e_ah^{ab}h_{bc}=e_c$. It follows from
  Proposition~\ref{prop:huab.equiv.projective} that $h$ is
  invertible. Moreover, one checks that
  \begin{align*}
    &e_am^a = 0\implies
    h(e_b,e_a)m^a=0\implies
      h_{ba}m^a=0\implies\\
    &h^{cb}h_{ba}m^a=0\implies
      m^c=0,
  \end{align*}
  showing that $\{e_a\}_{a=1}^n$ is indeed a basis of $M$.
\end{proof}

\noindent
One of the technical benefits of projective modules, is that they can
be realized as the image of a projection in a free module, allowing
one to construct objects in a projective module by first constructing
them in a free module (which is in general much simpler). A typical
example, which is relevant for this paper, are connections on a
projective module, which can be obtained by projecting connections on
the corresponding free module.

Considering a hermitian module $(M,h)$, such that $M$ is projective,
it is natural to ask if every such hermitian module is isometric to
the image of a projection in a free module; i.e. if there exists a
free hermitian module $(\A^n,\htilde)$ and a projection
$p:\A^n\to\A^n$ such that $(p(\A^n),\htilde|_{p(\A^n)})\simeq (M,h)$?
A positive answer to this question can be found in
Theorem~\ref{thm:M.projective.embedded}, but let us first start by
showing how one can easily construct such modules by using orthogonal
projections.

\begin{definition}
  Let $(M,h)$ be a hermitian $\A$-module and let $\phi:M\to M$ be an
  endomorphism. If $h(\phi(m_1),m_2)=h(m_1,\phi(m_2))$ for all
  $m_2,m_2\in M$ then $\phi$ is said to be \emph{orthogonal with respect
    to $h$}.
\end{definition}

\begin{proposition}\label{prop:projection.regular.hermitian.module}
  Let $(\A^n,\htilde)$ be a free regular hermitian module. If $p$ is an orthogonal
  projection on $(\A^n,\htilde)$, then
  $(p(\A^n),\htilde|_{p(\A^n)})$ is a regular hermitian module.
\end{proposition}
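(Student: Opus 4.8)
The plan is to reduce everything to verifying that the restricted form is invertible, since the other requirements are immediate. Set $M=p(\A^n)$ and $h=\htilde|_{M}$. Because $p$ is idempotent, $\A^n=\Im p\oplus\ker p$, so $M=\Im p$ is a direct summand of a finitely generated free module and is therefore finitely generated projective. Moreover, $h$ is manifestly a hermitian form on $M$, being the restriction of the hermitian form $\htilde$ to the submodule $M$. Thus the only real content is to show that $\hh\colon M\to M^{\ast}$, defined by $\hh(m_1)(m_2)=h(m_1,m_2)$, is a bijection.

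The key structural observation I would record first is that $\id-p$ is again an orthogonal projection, so that $\A^n=M\oplus M'$ with $M'=(\id-p)(\A^n)$, and that $M$ and $M'$ are $\htilde$-orthogonal: for $m\in M$ and $m'\in M'$ one has $\htilde(m,m')=\htilde(p(m),m')=\htilde(m,p(m'))=0$, using the orthogonality of $p$ together with $p(m)=m$ and $p(m')=0$. Injectivity of $\hh$ then follows quickly. If $\hh(m)=0$, i.e. $\htilde(m,m')=0$ for all $m'\in M$, then decomposing an arbitrary $x\in\A^n$ as $x=p(x)+(\id-p)(x)$ gives $\htilde(m,x)=\htilde(m,p(x))+\htilde(m,(\id-p)(x))=0$, the first term vanishing by hypothesis and the second by the orthogonality of $M$ and $M'$. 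Hence $m$ is annihilated by the invertible form $\htilde$ on $\A^n$, forcing $m=0$.

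For surjectivity --- which I expect to be the main obstacle, since it is where the invertibility of $\htilde$ on the ambient free module must be exploited --- the idea is to lift a functional on $M$ to the whole module, invert it there, and project back. Given $\omega\in M^{\ast}$, define $\tilde\omega\in(\A^n)^{\ast}$ by $\tilde\omega=\omega\circ p$; this is a right-module homomorphism because both $\omega$ and $p$ are, and it restricts to $\omega$ on $M$ since $p$ fixes $M$. As $\htilde$ is invertible on $\A^n$, there is $x\in\A^n$ with $\htilde(x,y)=\tilde\omega(y)=\omega(p(y))$ for all $y\in\A^n$. Setting $m=p(x)\in M$ and using the orthogonality of $p$, one computes for every $m'\in M$ that $\hh(m)(m')=\htilde(p(x),m')=\htilde(x,p(m'))=\htilde(x,m')=\omega(m')$, so $\hh(m)=\omega$. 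This establishes that $\hh$ is a bijection, whence $h$ is invertible and $(M,h)$ is a regular hermitian module.

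As an alternative to the explicit inversion, one could instead invoke Proposition~\ref{prop:huab.equiv.projective}: taking the generators $p(\eh_a)$ of $M$ and the inverse $\htilde^{ab}$ of the ambient form relative to the basis $\{\eh_a\}$, one would guess the local inverse for $h$ and verify the two defining identities. I would nonetheless prefer the decomposition argument above, as it makes the role of orthogonality transparent and keeps the index bookkeeping to a minimum.
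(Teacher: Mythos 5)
Your proof is correct and follows essentially the same route as the paper's: both reduce the statement to invertibility of the restricted form, lift functionals on $p(\A^n)$ to the ambient free module via composition with $p$ (equivalently, $p^t$-invariance), and combine orthogonality of $p$ with invertibility of $\htilde$ to get injectivity and surjectivity of $\hh$. The only cosmetic difference is in the surjectivity step, where you project the preimage $x$ onto $M$ and verify directly, whereas the paper shows the preimage is already fixed by $p$; this changes nothing of substance.
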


\begin{proof}
  Setting $M=p(\A^n)$, one identifies the elements of $M$ with
  elements $U\in\A^n$ such that $p(U)=U$. Analogously, the elements of
  the dual module $M^\ast$ can be identified with elements
  $\omega\in(\A^n)^\ast$ such that $p^t(\omega)=\omega$, where
  $p^t:(\A^n)^\ast\to(\A^n)^\ast$ denotes the transpose of $p$;
  i.e. $p^t(\omega)(U)=\omega(p(U))$. Setting $h=\htilde|_{p(\A^n)}$
  it is easy to check that $p^t(\hh(m))=\hh(m)$ for all $m\in M$; namely
  \begin{align*}
    p^t(\hh(m))(V) = \hh(m)\paraa{p(V)} = h(m,p(V))
    =h(p(m),V)=h(m,V) = \hh(m)(V),
  \end{align*}
  since $p$ is an orthogonal projection. Let us now show that $\hh$ is
  injective. Thus, assume $\hh(m)=0$ for some $m\in M$, which is
  equivalent to $\hh(m)(p(U))=0$ for all $U\in\A^n$. Since
  $p^t(\hh(m))=\hh(m)$ one obtains
  \begin{align*}
    0 = \hh(m)\paraa{p(U)} = p^t\paraa{\hh(m)}(U) = \hh(m)(U) = \htilde(m,U)
  \end{align*}
  for all $U\in\A^n$, implying that $m=0$ since $\htilde$ is
  invertible. Hence, $\hh$ is injective. Let us now prove that $\hh$
  is surjective. Thus, let $\omega\in M^\ast$ be arbitrary. Since
  $\htilde$ is invertible, there exists $U\in\A^n$ such that
  $\hat{\htilde}(U)=\omega$. Since $p^t(\omega)=\omega$ is follows
  that
  \begin{align*}
    p^t\paraa{\hat{\htilde}(U)} = \hat{\htilde}(U)\equivalent
    \htilde(p(U),V) = \htilde(U,V)
  \end{align*}
  for all $V\in\A^n$. Since $\htilde$ is invertible, we conclude that
  $p(U)=U$ which shows that there exists $U\in M$ such that
  $\hh(U)=\omega$. Hence, $h=\htilde|_{p(\A^n)}$ is invertible and
  $(p(\A^n),h)$ is a regular hermitian module.
\end{proof}

\noindent
The above result tells us that as soon as one has an orthogonal
projection and an invertible metric on a free module, one may
construct a regular hermitian module as the image of the
projection. The next result shows that this situation is in fact
generic; every regular hermitian module arise in this way.

\begin{theorem}\label{thm:M.projective.embedded}
  Let $(M,h')$ be a regular hermitian
  right $\A$-module such that $M$ is generated by $n$
  elements. Moreover, define the following invertible hermitian form\footnote{Note that in the direct sum $\A^n\oplus(\A^n)^\ast$, the dual $(\A^n)^\ast$ is considered as a right $\A$-module.}
  \begin{align*}
    &b:\A^n\oplus(\A^n)^\ast\times \A^n\oplus(\A^n)^\ast\to\A\\
    &b\paraa{(U,\omega),(V,\eta)} = \eta(U)^\ast + \omega(V)
  \end{align*}
  for $U,V\in\A^n$ and $\omega,\eta\in(\A^n)^\ast$. Then there exists
  a projection
  \begin{align*}
   \ph:\A^n\oplus(\A^n)^\ast\to \A^n\oplus(\A^n)^\ast, 
  \end{align*}
  which is orthogonal with respect to $b$, such that
  \begin{align*}
    \paraa{\ph(\A^n\oplus(\A^n)^\ast),b|_{\ph(\A^n\oplus(\A^n)^\ast)}}
    \simeq (M,h').
  \end{align*}
\end{theorem}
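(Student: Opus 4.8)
The plan is to realise $(M,h')$ as a $b$-orthogonal retract of the free hyperbolic module $F:=\A^n\oplus(\A^n)^\ast$ in two stages: first build an explicit isometric embedding $\phi\colon M\to F$, and then upgrade the (non-orthogonal) retraction onto $\phi(M)$ to a genuinely $b$-orthogonal projection.

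First I would fix generators $e_a=\pi(\eh_a)$ of $M$, where $\pi\colon\A^n\to M$ is the surjection sending the standard basis to the $e_a$, and exploit the regularity of $(M,h')$: by Proposition~\ref{prop:huab.equiv.projective} there exist $h^{ab}$ with $(h^{ab})^\ast=h^{ba}$ and $e_ah^{ab}h_{bc}=e_c$, so $\sigma(m):=\eh_ah^{ab}h'(e_b,m)$ defines a module section of $\pi$ (that is, $\pi\circ\sigma=\id_M$). Writing $\pi^t\colon M^\ast\to(\A^n)^\ast$ for the transpose, $\pi^t(\xi)(U)=\xi(\pi(U))$, I would then set
\[
\phi\colon M\to F,\qquad \phi(m)=\parab{\sigma(m),\tfrac12\,\pi^t\paraa{\widehat{h'}(m)}}.
\]
This is a homomorphism of right modules (each component is, for the module structures of Section~2), and it is injective since $\pi\circ\sigma=\id_M$ forces $\sigma$, and hence $\phi$, to be injective. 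The role of the factor $\tfrac12$ is that, using $\pi\circ\sigma=\id_M$ together with $h'(m_2,m_1)^\ast=h'(m_1,m_2)$, a direct computation yields
\[
b\paraa{\phi(m_1),\phi(m_2)}=\tfrac12 h'(m_2,m_1)^\ast+\tfrac12 h'(m_1,m_2)=h'(m_1,m_2),
\]
so $\phi$ is an isometry onto $N:=\phi(M)$ and $\phi\colon(M,h')\to(N,b|_N)$ is an isometric isomorphism. In particular $N$ is finitely generated projective and $b|_N$ is invertible.

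The second stage produces the orthogonal projection. Since $N\cong M$ carries the invertible form $b|_N$, the associated map $\widehat{b|_N}\colon N\to N^\ast$ is an isomorphism, so I can define
\[
\ph:=\paraa{\widehat{b|_N}}^{-1}\circ\rho\colon F\to N\subseteq F,\qquad \rho(x)(w)=b(x,w)\ \ (w\in N),
\]
where $\rho\colon F\to N^\ast$ is the restriction $x\mapsto b(x,-)|_N$, a right-module homomorphism. For $w_0\in N$ one has $\rho(w_0)=\widehat{b|_N}(w_0)$, whence $\ph|_N=\id_N$; this makes $\ph$ idempotent with image exactly $N$ and $\ker\ph=N^{\perp_b}$. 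Orthogonality then follows formally: for $w\in N$ one checks $b(\ph(x),w)=\rho(x)(w)=b(x,w)$, so conjugating gives $y-\ph(y)\in N^{\perp_b}$, and therefore $b(\ph(x),y)=b(\ph(x),\ph(y))=b(x,\ph(y))$ for all $x,y\in F$. Consequently $\ph$ is a $b$-orthogonal projection with $\paraa{\ph(F),b|_{\ph(F)}}=(N,b|_N)\simeq(M,h')$, as required.

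The bookkeeping that $\phi$, $\pi^t$, $\rho$ and $\widehat{b|_N}$ respect the various (right-)module structures, along with the isometry computation, is routine. The main obstacle is the existence of the orthogonal projection itself: at the module level one cannot split off $N^{\perp_b}$ by a dimension count, so the decomposition $F=N\oplus N^{\perp_b}$ must be produced by hand. The construction above circumvents this precisely by using that $b|_N$ is invertible — a consequence of the regularity of $(M,h')$ transported through the isometry $\phi$ — which is exactly what makes $\widehat{b|_N}$ invertible and hence $\ph$ well defined; if one prefers, all of these maps can be written out explicitly in terms of the $h^{ab}$ and $h_{ab}$, matching the index calculus of Proposition~\ref{prop:huab.equiv.projective}.
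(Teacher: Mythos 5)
Your proof is correct, and while its first half runs parallel to the paper's, the key second step takes a genuinely different route. Both arguments use the same graph-type isometric embedding into the hyperbolic module: the paper realizes $(M,h')$ as $(p(\A^n),h|_{p(\A^n)})$, where $p=\psi\circ\phi$ is built from the surjection $\phi:\A^n\to M$ (your $\pi$) and a section $\psi$ (your $\sigma$, which you construct explicitly from $h^{ab}$ rather than by abstract projectivity), and then embeds it by $U\mapsto\tfrac{1}{\sqrt 2}(U,\hh(U))$, which agrees with your $m\mapsto\paraa{\sigma(m),\tfrac12\pi^t(\widehat{h'}(m))}$ up to normalization. The divergence is in how the $b$-orthogonal projection onto the embedded copy is produced. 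The paper writes down the explicit idempotent $\ph(U,\omega)=\tfrac12\paraa{p(U)+(\hh^{-1}\circ p^t)(\omega),\,p^t(\omega)+(\hh\circ p)(U)}$ and verifies idempotency and orthogonality by direct computation; this yields formulas that feed into later calculations (e.g. projected connections), but it leans on $\hh^{-1}$, which literally does not exist unless the chosen generators form a basis (one has $\ker\hh\supseteq\ker\phi$), so the expression $\hh^{-1}\circ p^t$ needs the additional observation that $p^t\omega$ lies in $\Im\hh$ together with a canonical choice of preimage in $p(\A^n)$. Your construction replaces that computation by a clean, reusable lemma: if $N$ is a submodule of a hermitian module $(F,b)$ on which $b$ restricts to an \emph{invertible} form, then $\ph=(\widehat{b|_N})^{-1}\circ\rho$ with $\rho(x)=b(x,-)|_N$ is a $b$-orthogonal projection onto $N$ --- the module analogue of ``a nondegenerate subspace is an orthogonal summand'' --- and invertibility of $b|_N$ is simply transported from $h'$ through your isometry. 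This is shorter, isolates a general principle, and sidesteps the $\hh^{-1}$ subtlety entirely; the price is that your $\ph$ is defined through an inverse rather than by an explicit matrix formula, although, as you note, it can be written out in terms of $h^{ab}$ and $h_{ab}$ when needed.
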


\begin{proof}
  Let $\{e_i\}_{i=1}^n$ be generators of $M$ and let
  $\{\eh_i\}_{i=1}^n$ be a basis of $\A^n$; moreover, one defines
  $\phi:\A^n\to M$ as
  \begin{align*}
    \phi(\eh_iU^i)=\phi(e_i)U^i,
  \end{align*}
  inducing the hermitian form $h:\A^n\times\A^n\to\A$ as
  \begin{align}\label{eq:induced.metric.def}
    h(U,V) = h'\paraa{\phi(U),\phi(V)}.
  \end{align}
  Since $M$ is a projective module (and $\phi$ is surjective), there
  exists a module homomorphism $\psi:M\to\A^n$ such that
  $\phi\circ\psi=\id_M$. Defining $p=\psi\circ\phi$, it is a standard
  fact that that $p^2=p$ and $p(\A^n)\simeq M$, and from
  \eqref{eq:induced.metric.def} it follows that $(p(\A^n),h|_{p(\A^n)})$
  is in fact isometric to $(M,h')$.  Furthermore, the projection is
  orthogonal with respect to $h$; i.e.
  \begin{align*}
    h\paraa{p(U),V}
    &= h\paraa{(\psi\circ\phi)(U),V}
      =h'\paraa{(\phi\circ\psi\circ\phi)(U),\phi(V)}
      =h'\paraa{\phi(U),\phi(V)}\\
    &=h'\paraa{\phi(U),(\phi\circ\psi\circ\phi)(V)}
      =h\paraa{U,(\psi\circ\phi)(V)} = h\paraa{U,p(V)},
  \end{align*}
  due to $\phi\circ\psi=\id_M$.
 
  In the following, we shall construct a projection $\ph$ on the free
  module
  \begin{align*}
    B=\A^n\oplus(\A^n)^\ast,
  \end{align*}
  that is orthogonal with respect to the invertible hermitian form $b$
  on $B$, such that
  \begin{align*}
    \paraa{\ph(B),b|_{\ph(B)}} \simeq \paraa{p(\A^n),h|_{p(\A^n)}}\simeq(M,h').
  \end{align*}
  Recall that the transpose of $p$, denoted by
  $p^t:(\A^n)^\ast\to(\A^n)^\ast$, is the map given by
  \begin{align*}
    (p^t\omega)(U) = \omega\paraa{p(U)}
  \end{align*}
  and it follows that $(p^t)^2=p^t$. Moreover, note that
  \begin{align}\label{eq:pthh}
    (p^t\circ\hh\circ p)(U) = \hh(p^2(U)) = \hh(p(U)) = (\hh\circ p)(U),
  \end{align}
  for all $U\in\A^n$, as well as
  \begin{align*}
    \paraa{(\hh\circ p\circ\hh^{-1}\circ p^t)(\omega)}(U)
    &=h\paraa{(p\circ\hh^{-1}\circ p^t)(\omega),U}
    =h\paraa{(\hh^{-1}\circ p^t)(\omega),p(U)}\\
    &=h\paraa{p(U),(\hh^{-1}\circ p^t)(\omega)}^\ast
      =\bracketb{\hh\paraa{p(U)}\paraa{(\hh^{-1}\circ p^t)(\omega)}}^\ast\\
    &=(p^t\omega)\paraa{p(U)}
      =\omega\paraa{p^2(U)}=\omega\paraa{p(U)}
      =(p^t\omega)(U)
  \end{align*}
  (using that $\eta(V)=\hh(V)(\hh^{-1}(\eta))^\ast$), from which we
  conclude that
  \begin{align}\label{eq:phhinv}
    \paraa{p\circ\hh^{-1}\circ p^t}(\omega) = \paraa{\hh^{-1}\circ p^t}(\omega).
  \end{align}
  for all $\omega\in(\A^n)^\ast$. Now, define
  $\ph:B\to B$ as
  \begin{align*}
    \ph(U,\omega) = \frac{1}{2}
    \paraa{p(U)+(\hh^{-1}\circ p^t)(\omega),p^t(\omega)+(\hh\circ p)(U)}
  \end{align*}
  for $(U,\omega)\in B$. One computes
  \begin{align*}
    \ph^2(U,\omega)
    &= \frac{1}{4}
    \parab{p\paraa{p(U)+(\hh^{-1}\circ p^t)(\omega)}+
    (\hh^{-1}\circ p^t)\paraa{p^t(\omega)+(\hh\circ p)(U)},\\
    &\qquad\qquad p^t\paraa{p^t(\omega)+(\hh\circ p)(U)}+
      (\hh\circ p)\paraa{p(U)+(\hh^{-1}\circ p^t)(\omega)}}\\
    &= \frac{1}{4}\parab{
      2p(U)+2(\hh^{-1}\circ p^t)(\omega),
      2p^t(\omega)+2(\hh\circ p)(U)
      }=\ph(U,\omega),
  \end{align*}
  by using that $p^2=p$, $(p^t)^2=p^t$ together with \eqref{eq:pthh}
  and \eqref{eq:phhinv}.
  
  Next, let $b:B\times B\to \A$ be defined by
  \begin{align*}
    b\paraa{(U,\omega),(V,\eta)} = \eta(U)^\ast + \omega(V),
  \end{align*}
  and it is easy to check that $b$ is an invertible hermitian form on
  $B$. Let us now show that $\ph$ is orthogonal with respect to $b$. One computes
  \begin{align*}
    b\paraa{&\ph(U,\omega),(V,\eta)}
              = b\paraa{(p(U)+(\hh^{-1}\circ p^t)(\omega),p^t(\omega)+(\hh\circ p)(U)),(V,\eta)}\\
            &= \eta\paraa{p(U)}^\ast+\eta\paraa{(\hh^{-1}\circ p^t)(\omega)}^\ast
              +(p^t\omega)(V)+\paraa{(\hh\circ p)(U)}(V)\\
            &= (p^t\eta)(U)^\ast+h^{-1}\paraa{\eta,p^t\omega}^\ast
              +\omega\paraa{p(V)}+h\paraa{p(U),V}\\
            &= (p^t\eta)(U)^\ast+h^{-1}\paraa{\omega,p^t\eta}
              +\omega\paraa{p(V)}+h(p(V),U)^\ast\\
            &= (p^t\eta)(U)^\ast + \omega\paraa{(\hh^{-1}\circ p^t)(\eta)}
              +\omega\paraa{p(V)}+h(p(V),U)^\ast\\
            &= b\paraa{(U,\omega),\ph(V,\eta)},
  \end{align*}
  by using that $h^{-1}(p^t\omega,\eta)=h^{-1}(\omega,p^t\eta)$, which
  follows from the fact that $p$ is orthogonal with respect to
  $h$. Finally, let us show that $(\ph(B),b|_{\ph(B)})$ is isometric
  to $(p(\A^n),h|_{p(\A^n)})$. Defining a module homomorphism $\Phi:p(\A^n)\to B$ as
  \begin{align*}
    \Phi(U) = \tfrac{1}{\sqrt{2}}(U,\hh(U))
  \end{align*}
  and one readily checks that $\ph(\Phi(U))=\Phi(U)$ for all
  $U\in\A^n$ such that $p(U)=U$, implying that
  $\Phi:p(\A^n)\to \ph(B)$. Moreover, $\Phi$ is injective since
  $\Phi(U)=(0,0)$ immediately implies that $U=0$. Note that, for
  $U=p(V)+(\hh^{-1}\circ p^t)(\eta)\in p(\A^n)$ one obtains
  \begin{align*}
    \Phi(U) = \tfrac{1}{\sqrt{2}}
    \paraa{p(V)+(\hh^{-1}\circ p^t)(\eta),(\hh\circ p)(V)+p^t(\eta)}
    =\sqrt{2}\,\ph(V,\eta)
  \end{align*}
  showing that $\Phi:\Phi:p(\A^n)\to \ph(B)$ is surjective. Hence,
  $\Phi$ is a module isomorphism. Moreover, one finds that
  \begin{align*}
    b\paraa{\Phi(U),\Phi(V)}
    &= \tfrac{1}{2}b\paraa{(U,\hh(U)),(V,\hh(V))}
      =\tfrac{1}{2}\paraa{\hh(V)(U)^\ast+\hh(U)(V)}\\
    &=\tfrac{1}{2}\paraa{h(V,U)^\ast+h(U,V)}=h(U,V)
  \end{align*}
  showing that $\Phi$ is indeed an isometry. We conclude that
  \begin{equation*}
    \paraa{\ph(B),b|_{\ph(B)}} \simeq \paraa{p(\A^n),h|_{p(\A^n)}}\simeq(M,h').\qedhere    
  \end{equation*}
\end{proof}

\noindent
In the next section, we shall use
Theorem~\ref{thm:M.projective.embedded} to show that every regular
hermitian module has a connection which is compatible with the
hermitian form.

\section{Lie pairs and affine connections}
\label{sec:lie.pairs}

\noindent
Given a hermitian module $(M,h)$ we are interested in studying
connections on $M$ that are compatible with the hermitian form $h$,
much in analogy with hermitian connections on vector bundles in
differential geometry. As we shall see,
Theorem~\ref{thm:M.projective.embedded} implies that there exists a
hermitian connection on every regular hermitian module. However, let
us start by recalling the derivation based type of calculus that is
relevant in our context.

Let $\A$ be a unital $\ast$-algebra and let $\d:\A\to\A$ be a derivation. Defining
\begin{align*}
  \d^\ast(f) = \paraa{\d(f^\ast)}^\ast
\end{align*}
it is easy to check that $\d^\ast$ is again a derivation of $\A$.

\begin{definition}
  A set $S\subseteq\Der(\A)$ is $\ast$-closed if $\d^\ast\in S$ for
  all $\d\in S$.
\end{definition}

\noindent
If $V\subseteq\Der(\A)$ is a (complex) vector space which is
$\ast$-closed, then $\ast:V\to V$ represents a real structure on $V$. Hence,
$V$ is the complexification of the real vector space $V_\reals$
generated by the hermitian derivations in $V$, implying that $V$ has a
basis of hermitian derivations (as a complex vector space).

\begin{definition}
  A \emph{Lie pair} is a pair $(\A,\g)$ where $\A$ is a unital
  $\ast$-algebra and $\g$ is a $\ast$-closed Lie algebra of
  derivations on $\A$.
\end{definition}

\noindent
A Lie pair defines a derivation based calculus on the algebra $\A$ and
one introduces connections in a standard way.

\begin{definition}\label{def:connection}
  Let $(\A,\g)$ be a Lie pair and let $M$ be a right $\A$-module.  A
  \emph{connection on $M$} is a map $\nabla:\g\times M\to M$ such that
  \begin{enumerate}
  \item $\nabla_{\d}(m_1+m_2) = \nabla_{\d}m_1+\nabla_{\d}m_2$\label{eq:def.conn.linear}
  \item $\nabla_{\lambda_1\d_1+\lambda_2\d_2}m = \lambda_1\nabla_{\d_1}m+\lambda_2\nabla_{\d_2}m$\label{eq:def.conn.linder}
  \item $\nabla_{\d}(ma) = (\nabla_{\d}m)a + m\d(a)$\label{eq:def.conn.derprop}
  \end{enumerate}
  for $m,m_1,m_2\in M$, $\d,\d_1,\d_2\in\g$,
  $\lambda_1,\lambda_2\in\complex$ and $a\in\A$.
\end{definition}

\noindent
Furthermore, the compatibility with a hermitian form is introduced in
a straight-forward way.

\begin{definition}
  Let $(\A,\g)$ be a Lie pair and let $(M,h)$ be a right hermitian
  $\A$-module. A connection $\nabla$ on $M$ is called \emph{hermitian} if
  \begin{align*}
    \d h(m_1,m_2) = h(\nabla_{\d^\ast}m_1,m_2) + h(m_1,\nabla_{\d}m_2)
  \end{align*}
  for all $m_1,m_2\in M$ and $\d\in\g$. A hermitian connection
  $\nabla$ on a hermitian module $(M,h)$ is said to be
  \emph{compatible with $h$}.
\end{definition}

\noindent
Given a free hermitian module together with an orthogonal projection,
any hermitian connection on the free module induces a hermitian
connection on the corresponding projective module.

\begin{proposition}\label{prop:projected.connection.hermitian}
  Let $(\A,\g)$ be a Lie pair and let $\nabla$ be a hermitian
  connection on the free hermitian module $(\A^n,h)$. If
  $p:\A^n\to\A^n$ is an orthogonal projection, then $p\circ\nabla$ is
  a hermitian connection on $(p(\A^n),h|_{p(\A^n)})$.
\end{proposition}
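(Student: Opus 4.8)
The plan is to verify directly that the projected map $p\circ\nabla$ satisfies the three axioms of a connection (Definition~\ref{def:connection}) together with the hermitian compatibility condition, using that $p$ is an $\A$-linear orthogonal projection. Throughout, elements of $M=p(\A^n)$ are identified with $U\in\A^n$ satisfying $p(U)=U$, exactly as in the proof of Proposition~\ref{prop:projection.regular.hermitian.module}. I will write $\nablat=p\circ\nabla$ for the candidate connection and check each property in turn.

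First I would confirm that $\nablat$ is a well-defined connection on $M$. Additivity in $M$ and complex-linearity in the derivation argument (properties~\eqref{eq:def.conn.linear} and \eqref{eq:def.conn.linder}) are immediate, since both $\nabla$ and $p$ respect these operations. The only genuinely nontrivial axiom is the Leibniz property~\eqref{eq:def.conn.derprop}. For $m\in M$ and $a\in\A$ I compute
\begin{align*}
  \nablat_{\d}(ma) = p\paraa{\nabla_{\d}(ma)}
  = p\paraa{(\nabla_{\d}m)a + m\,\d(a)}
  = p(\nabla_{\d}m)a + p(m)\d(a)
  = (\nablat_{\d}m)a + m\,\d(a),
\end{align*}
where I used that $p$ is a module homomorphism and that $p(m)=m$ because $m\in M$. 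This shows $\nablat$ lands in $M$ (as $p$ is a projection onto $p(\A^n)$) and satisfies the derivation property, so $\nablat$ is a connection on $M$.

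The remaining, and most interesting, step is hermitian compatibility with $h|_{p(\A^n)}$. Here the orthogonality of $p$ is exactly what makes the argument work. For $m_1,m_2\in M$, starting from the hermitian property of $\nabla$ on the free module and using $m_1=p(m_1)$, $m_2=p(m_2)$ together with the orthogonality relation $h(p(x),y)=h(x,p(y))$, I would write
\begin{align*}
  \d h(m_1,m_2) &= h(\nabla_{\d^\ast}m_1,m_2) + h(m_1,\nabla_{\d}m_2)\\
  &= h\paraa{\nabla_{\d^\ast}m_1,p(m_2)} + h\paraa{p(m_1),\nabla_{\d}m_2}\\
  &= h\paraa{p(\nabla_{\d^\ast}m_1),m_2} + h\paraa{m_1,p(\nabla_{\d}m_2)}\\
  &= h(\nablat_{\d^\ast}m_1,m_2) + h(m_1,\nablat_{\d}m_2).
\end{align*}
Since the left-hand side $\d h(m_1,m_2)$ involves only values of $h$ on $M$, it agrees with $\d\,(h|_{p(\A^n)})(m_1,m_2)$, and we conclude that $\nablat=p\circ\nabla$ is hermitian on $(p(\A^n),h|_{p(\A^n)})$.

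**The main obstacle** is conceptual rather than computational: one must keep straight that orthogonality of $p$ is used to move $p$ across the hermitian form (i.e. $h(p(x),y)=h(x,p(y))$), which is precisely the hypothesis that $p$ be orthogonal with respect to $h$, and that the $\ast$-structure correctly pairs $\d^\ast$ on the first slot with $\d$ on the second. Once the identification of $M$ with fixed points of $p$ is in place, each verification is a short line.
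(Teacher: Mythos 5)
Your proof is correct and follows essentially the same route as the paper: linearity properties are immediate since $p$ is a module homomorphism, the Leibniz rule uses $p(m)=m$ for $m\in p(\A^n)$, and hermitian compatibility combines the hermitian property of $\nabla$ with the orthogonality relation $h(p(x),y)=h(x,p(y))$ to absorb the projection. The only difference is cosmetic: you run the chain of equalities from $\d h(m_1,m_2)$ to the projected connection, while the paper runs it in the opposite direction.
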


\begin{proof}
  Since $p$ is a module homomorphism it is clear that properties
  \eqref{eq:def.conn.linear} and \eqref{eq:def.conn.linder} in
  Definition~\ref{def:connection} for $\nablat=p\circ\nabla$ are
  satisfied. To check property \eqref{eq:def.conn.derprop} one
  computes
  \begin{align*}
    \nablat_{\d}(Ua) = p\paraa{\nabla_{\d}(Ua)}
    =p\paraa{(\nabla_\d U)a + U\d(a)}
    =(\nablat_{\d}U)a + U\d(a)
  \end{align*}
  since $p(U)=U$ for $U\in p(\A^n)$. Next, one checks that $\nablat$
  is a hermitian connection; for $U,V\in p(\A^n)$ one obtains
  \begin{align*}
    h\paraa{\nablat_{\d^\ast}U,V}+h\paraa{U,\nablat_{\d}V}
    &= h\paraa{\nabla_{\d^\ast}U,p(V)} + h\paraa{p(U),\nabla_{\d}V}\\
    &=h\paraa{\nabla_{\d^\ast}U,V} + h\paraa{U,\nabla_{\d}V}
      =\d h(U,V)
  \end{align*}
  using that $p$ is orthogonal and that $\nabla$ is a hermitian
  connection on $(\A^n,h)$.
\end{proof}

\noindent
It is straight-forward to parametrize all hermitian connections on a
free regular hermitian module.

\begin{proposition}\label{prop:hermitian.connections.free.module}
  Let $(\A,\g)$ be a Lie pair, let $\{\d_a\}_{a=1}^m$ be a hermitian
  basis of $\g$ and let $\{\eh_i\}_{i=1}^n$ be a basis of the free
  module $\A^n$. A connection $\nabla$ on the regular right
  hermitian $\A$-module $(\A^n,h)$ is hermitian if and only if there exist
  $\gamma_{a,ij}\in\A$ (for $a=1,\ldots,m$ and $i,j=1,\ldots,n$) such
  that $\gamma_{a,ij}^\ast = \gamma_{a,ji}$ and
  \begin{align}
    \nabla_{\d_a}\eh_i = \eh_jh^{jk}\paraa{\tfrac{1}{2}h_{ki}+i\gamma_{a,ki}}
  \end{align}
  where $h_{ij}=h(\eh_i,\eh_j)$ and $h^{ij}h_{jk}=\delta^i_k\mid$.
\end{proposition}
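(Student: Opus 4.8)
The plan is to use that, on a free module, a connection is completely determined by its action on a basis, and conversely that any prescription of connection coefficients defines a connection. Writing $\nabla_{\d_a}\eh_i=\eh_j\Gamma^j_{a,i}$ with $\Gamma^j_{a,i}\in\A$, the axioms of Definition~\ref{def:connection} show that $\nabla$ is recovered from the $\Gamma^j_{a,i}$ via $\complex$-linearity in the derivation argument and the Leibniz rule. The statement therefore reduces to rewriting the hermitian condition as an equation on the $\Gamma^j_{a,i}$ and then reparametrising its solutions by the $\gamma_{a,ij}$.

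First I would reduce the hermitian condition to the basis. For $m_1=\eh_iU^i$ and $m_2=\eh_kV^k$ I would expand $\d_a h(m_1,m_2)=\d_a\paraa{(U^i)^\ast h_{ik}V^k}$ with the Leibniz rule and expand $h(\nabla_{\d_a}m_1,m_2)+h(\eh_iU^i,\nabla_{\d_a}m_2)$ using the connection axiom and the hermitian-form properties. Since $\{\d_a\}$ is hermitian, $\d_a^\ast=\d_a$, so $\d_a\paraa{(U^i)^\ast}=\paraa{\d_a(U^i)}^\ast$; this makes the terms carrying $\d_a(U^i)$ and $\d_a(V^k)$ match and cancel between the two sides. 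Since $\{\eh_i\}$ is a basis, stripping off $U$ and $V$ then yields the equivalent condition
\begin{align*}
  \d_a h_{ik}=h(\nabla_{\d_a}\eh_i,\eh_k)+h(\eh_i,\nabla_{\d_a}\eh_k)=(\Gamma^j_{a,i})^\ast h_{jk}+h_{ij}\Gamma^j_{a,k},
\end{align*}
where the last equality uses $h(\eh_jc,\eh_k)=c^\ast h_{jk}$ and $h(\eh_i,\eh_jc)=h_{ij}c$.

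Next I would pass to lowered coefficients. Because $\{\eh_i\}$ is a basis, $(h_{ij})$ admits the two-sided inverse $(h^{ij})$, with $(h^{ij})^\ast=h^{ji}$ and $h_{ij}h^{jk}=\delta^k_i\mid$ (the latter obtained from $h^{ij}h_{jk}=\delta^i_k\mid$ by taking adjoints). Hence $\Gamma_{a,ik}:=h_{ij}\Gamma^j_{a,k}$ is a bijective reparametrisation, and in these variables the condition becomes $\d_a h_{ik}=\Gamma_{a,ik}+(\Gamma_{a,ki})^\ast$, using $(\Gamma^j_{a,i})^\ast h_{jk}=(\Gamma_{a,ki})^\ast$. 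I would then define $\gamma_{a,ik}$ by $\Gamma_{a,ik}=\tfrac12\d_a(h_{ik})+i\gamma_{a,ik}$, which is again a bijection; note that $\gamma=0$ already gives a hermitian connection since $(\d_a h_{ki})^\ast=\d_a(h_{ik})$ by hermiticity of $\d_a$ together with $h_{ki}^\ast=h_{ik}$. Substituting and cancelling the common $\tfrac12\d_a h_{ik}$ terms turns the condition into $i\paraa{\gamma_{a,ik}-\gamma_{a,ki}^\ast}=0$, i.e. $\gamma_{a,ik}^\ast=\gamma_{a,ki}$. Raising the index back with $h^{jk}$ recovers $\nabla_{\d_a}\eh_i=\eh_jh^{jk}\paraa{\tfrac12\d_a(h_{ki})+i\gamma_{a,ki}}$, and since every step is an equivalence, both directions of the ``if and only if'' follow simultaneously.

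The main obstacle is bookkeeping the adjoints across the left/right module structure: one must correctly track the conjugation in $h(\eh_jc,\eh_k)=c^\ast h_{jk}$ and the identity $(\d_a f)^\ast=\d_a(f^\ast)$ coming from $\d_a$ being hermitian, since it is exactly the interplay of these two facts that converts the symmetry $\d_a h_{ik}=\Gamma_{a,ik}+(\Gamma_{a,ki})^\ast$ into the self-adjointness $\gamma_{a,ik}^\ast=\gamma_{a,ki}$. By comparison, the reduction to the basis and the raising/lowering of indices are routine once the Leibniz terms are seen to cancel.
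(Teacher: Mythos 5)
Your proof is correct and follows essentially the same route as the paper's: parametrize the connection by its Christoffel symbols, lower the index via $\Gamma_{a,ij}=h_{ik}\Gamma^k_{a,j}$, and split $\Gamma_{a,ij}=\tfrac{1}{2}\d_a h_{ij}+i\gamma_{a,ij}$ so that the hermitian condition becomes exactly $\gamma_{a,ij}^\ast=\gamma_{a,ji}$, with every step an equivalence. You are in fact slightly more explicit than the paper in checking that the Leibniz terms cancel when reducing the metric condition to basis elements, and you correctly read the displayed term $\tfrac{1}{2}h_{ki}$ as the intended $\tfrac{1}{2}\d_a h_{ki}$ (a typo in the statement, consistent with the paper's own final formula $\Gamma^i_{aj}=\tfrac{1}{2}h^{ik}\d_a h_{kj}+ih^{ik}\gamma_{a,kj}$).
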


\begin{proof}
  Let $\{\eh_i\}_{i=1}^n$ be a basis of $\A^n$ and let
  $\{\d_a\}_{a=1}^m$ be a hermitian basis of $\g$. A connection $\nabla$ is
  determined by the Christoffel symbols $\Gamma^i_{aj}$ defined as
  \begin{align*}
    \nabla_{\d_a}\eh_i = \eh_j\Gamma^j_{ai.}
  \end{align*}
  Conversely, an arbitrary choice of $\Gamma^i_{aj}\in \A$ defines a
  connection on $\A^n$ via
  \begin{align*}
    \nabla_{\d_a}\paraa{\eh_iU^i} = \eh_i\Gamma^i_{aj}U^j+\eh_i\d_aU^i.
  \end{align*}
  A hermitian connection on $(\A^n,h)$ satisfies
  \begin{align*}
    \d_ah(\eh_i,\eh_j) = h(\nabla_{\d_a}\eh_i,\eh_j)+h(\eh_i,\nabla_{\d_a}\eh_j),
  \end{align*}
  since $\d_a^\ast=\d_a$, giving 
  \begin{align*}
    \d_ah_{ij} = \paraa{\Gamma^k_{ai}}^\ast h_{kj}+h_{ik}\Gamma^k_{aj}\equivalent
    \d_ah_{ij} = \paraa{h_{jk}\Gamma^k_{ai}}^\ast+h_{ik}\Gamma^k_{aj}.
  \end{align*}
  Introducing $\Gammat_{a,ij}=h_{ik}\Gamma^k_{aj}$ the above equation becomes
  \begin{align*}
    \d_a h_{ij} = \Gammat_{a,ij}+\Gammat_{a,ji}^\ast,
  \end{align*}
  and writing $\Gammat_{a,ij} = \tfrac{1}{2}\d_ah_{ij}+i\gamma_{a,ij}$
  the above equation is equivalent to
  \begin{align*}
    \gamma_{a,ij}^\ast=\gamma_{a,ji}.
  \end{align*}
  Hence, $\nabla$ is a metric connection on $\A^n$ if and only if
  there exists $\gamma_{a,ij}\in\A$, such that
  $\gamma_{a,ij}^\ast=\gamma_{a,ji}$, and
  \begin{equation*}
    \Gamma^i_{aj} = \frac{1}{2}h^{ik}\d_ah_{kj}+ih^{ik}\gamma_{a,kj}.\qedhere
  \end{equation*}
\end{proof}

\noindent
Using the above results together with
Theorem~\ref{thm:M.projective.embedded} one can conclude that every
regular hermitian module has a hermitian connection (in fact, there
exist many such connections in general).

\begin{corollary}
  Let $(\A,\g)$ be a Lie pair and let $(M,h)$ be a regular hermitian
  $\A$-module. Then there exists a hermitian connection on $(M,h)$.
\end{corollary}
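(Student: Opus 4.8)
The plan is to chain together the three results that immediately precede the statement, reducing the assertion for a general regular hermitian module to the free case, where hermitian connections are already known to exist. First I would use that $(M,h')$ is, by definition, finitely generated, so it is generated by some $n$ elements; Theorem~\ref{thm:M.projective.embedded} then supplies a projection $\ph$ on the free module $B=\A^n\oplus(\A^n)^\ast$, orthogonal with respect to the invertible hermitian form $b$, together with an isometry $\Phi:(M,h')\to\paraa{\ph(B),b|_{\ph(B)}}$. The key observation at this stage is that $B\cong\A^{2n}$ is free and $b$ is invertible, so $(B,b)$ is a \emph{free} regular hermitian module — precisely the input demanded by the next two propositions.

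Next I would build a hermitian connection on $(M,h')$ in three moves. By Proposition~\ref{prop:hermitian.connections.free.module}, applied to the free regular hermitian module $(B,b)$ and a hermitian basis $\{\d_a\}$ of $\g$, a hermitian connection $\nabla$ on $(B,b)$ exists; the simplest choice takes all the parameters $\gamma_{a,ij}$ to be zero. Then Proposition~\ref{prop:projected.connection.hermitian} shows that the projected map $\nablat:=\ph\circ\nabla$ is a hermitian connection on $\paraa{\ph(B),b|_{\ph(B)}}$. Finally I would transport $\nablat$ back to $M$ through the isometry by setting
\begin{align*}
  \nabla^M_{\d}m = \Phi^{-1}\paraa{\nablat_{\d}\Phi(m)},
\end{align*}
which is the candidate hermitian connection on $(M,h')$.

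It remains to verify the defining properties, and this is essentially bookkeeping. Properties \eqref{eq:def.conn.linear}--\eqref{eq:def.conn.derprop} of Definition~\ref{def:connection} follow because $\Phi$ and $\Phi^{-1}$ are module homomorphisms, so they commute with the $\A$-action and carry the Leibniz term $m\,\d(a)$ through unchanged. The one step I expect to require genuine care — and the only ingredient not already packaged in the earlier results — is checking that pulling a hermitian connection back along an isometry yields a hermitian connection. For this I would use the isometry identity $b\paraa{\Phi(x),\Phi(y)}=h'(x,y)$ repeatedly: starting from $\Phi\paraa{\nabla^M_{\d}m}=\nablat_{\d}\Phi(m)$, one computes
\begin{align*}
  h'\paraa{\nabla^M_{\d^\ast}m_1,m_2}+h'\paraa{m_1,\nabla^M_{\d}m_2}
  &= b\paraa{\nablat_{\d^\ast}\Phi(m_1),\Phi(m_2)}+b\paraa{\Phi(m_1),\nablat_{\d}\Phi(m_2)}\\
  &= \d\, b\paraa{\Phi(m_1),\Phi(m_2)} = \d\, h'(m_1,m_2),
\end{align*}
where the middle equality uses that $\nablat$ is hermitian. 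This shows that $\nabla^M$ is compatible with $h'$, which completes the construction.
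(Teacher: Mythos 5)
Your proposal is correct and follows essentially the same route as the paper: Theorem~\ref{thm:M.projective.embedded} to realize $(M,h)$ as the image of an orthogonal projection in the free regular hermitian module $(B,b)$, Proposition~\ref{prop:hermitian.connections.free.module} to get a hermitian connection on $(B,b)$, and Proposition~\ref{prop:projected.connection.hermitian} to project it down. The only difference is that you explicitly verify that transporting a hermitian connection along the isometry $\Phi$ preserves the connection axioms and metric compatibility, a step the paper's proof leaves implicit in the phrase ``since $(M,h)$ is isometric to $(\ph(B),b|_{\ph(B)})$''; this is a worthwhile piece of bookkeeping but not a different argument.
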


\begin{proof}
  It follows from Theorem~\ref{thm:M.projective.embedded} that there
  exists a free regular hermitian module $(B,b)$ and a projection
  $\ph:B\to B$ such that $(M,h)\simeq (\ph(B),b|_{\ph(B)})$, and
  Proposition~\ref{prop:hermitian.connections.free.module} implies
  that there exists a hermitian connection $\nabla$ on
  $(B,b)$. Furthermore, it follows from
  Proposition~\ref{prop:projected.connection.hermitian} that
  $\ph\circ\nabla$ is a hermitian connection on
  $(\ph(B),b|_{\ph(B)})$. Since $(M,h)$ is isometric to $(\ph(B),b|_{\ph(B)})$
  we conclude that there exists a hermitian connection on $(M,h)$.
\end{proof}

\noindent
Let us now derive an explicit expression for a hermitian connection on
a regular hermitian module $(p(\A^n),\htilde|_{p(\A^n)})$. To this
end, let $(\A^n,\htilde)$ be a free regular (right) hermitian module
with basis $\{\eh_i\}_{i=1}^n$, and let $p$ be an orthogonal
projection with respect to $\htilde$, written as
\begin{align*}
  p(U) = \eh_i{P^{i}}_jU^j
\end{align*}
for $U=\eh_iU^i\in\A^n$. Moreover, we set $\et_i=\eh_j{P^j}_i$,
implying that the projective module $M=p(\A^n)$ is generated by
$\{\et_i\}_{i=1}^n$. Given a Lie pair $(\A,\g)$ and a hermitian basis
$\{\d_a\}_{a=1}^m$ of $\g$, let $\nablat$ be a hermitian connection on
$(\A^n,\htilde)$, as given in
Proposition~\ref{prop:hermitian.connections.free.module}
\begin{align}
  \nablat_{\d_a}(\eh_iU^i) = \eh_i\d U^i
  + \eh_i\htilde^{ik}\paraa{\tfrac{1}{2}\d_a\htilde_{kj}+i\gamma_{a,kj}}U^j.
\end{align}
Denoting $h=\htilde|_M$, it follows that $\nabla=p\circ\nablat$ is a
connection on $M$ compatible with the hermitian form $h$. One finds that
\begin{align}\label{eq:connection.projective.module}
  \nabla_{\d_a}\et_i = \et_j\d_a{P^{j}}_i+
  \et_j\htilde^{jk}\paraa{\tfrac{1}{2}\d_a\htilde_{kl}+i\gamma_{a,kl}}{P^l}_i.
\end{align}

\subsection{The fuzzy sphere}\label{sec:fuzzy.sphere}

As an example of the concepts developed in the previous section, let us
consider hermitian connections on the fuzzy sphere. For $\hbar>0$,
let $\Stwoh$ denote the unital $\ast$-algebra generated by
\begin{align*}
  X_1=X=X^\ast\qquad X_2=Y=Y^\ast\qquad X_3=Z=Z^\ast
\end{align*}
satisfying
\begin{align}
  [X_i,X_j]=\sum_{k=1}^3i\hbar\eps_{ijk}X_k\qand
  \sum_{k=1}^3X_k^2=X^2+Y^2+Z^2=\mid\label{eq:fuzzy.sphere.def.rel}
\end{align}
where $\eps_{ijk}$ denotes the completely antisymmetric symbol with
$\eps_{123}=1$. In the following we shall assume that any repeated
index is summed over from 1 to 3 unless stated otherwise. Moreover, we
let $\g_{S^2}$ denote the Lie algebra generated by the hermitian inner
derivations
\begin{align*}
  \d_1(f) = \frac{1}{i\hbar}[X,f]\qquad
  \d_2(f) = \frac{1}{i\hbar}[Y,f]\qquad
  \d_3(f) = \frac{1}{i\hbar}[Z,f]
\end{align*}
for $f\in\Stwoh$, satisfying
\begin{align*}
  [\d_i,\d_j]=\sum_{k=1}^3\eps_{ijk}\d_k.
\end{align*}
On the free (right) module $(\Stwoh)^3$, with basis
$\{\eh_i\}_{i=1}^3$, we consider the invertible hermitian form
$\htilde$ defined by
\begin{align}\label{eq:fuzzy.def.htilde}
  \htilde(U,V) = (U_k)^\ast V_k
\end{align}
for $U=\eh_iU_i$ and $V=\eh_iV_i$.

As in classical geometry, the fuzzy sphere has a natural projector in $(\Stwoh)^3$
\begin{align*}
  \Pi(U)=\eh_i\Pi_{ij}U_j = \eh_i\paraa{X_iX_j}U_j
\end{align*}
and one easily checks that
\begin{align*}
  \Pi^2(U) = \eh_i\Pi_{ij}\Pi_{jk}U_k
  =\eh_iX_i\paraa{X_jX_j}X_kU_k
  =\eh_iX_iX_kU_k=\Pi(U),
\end{align*}
due to \eqref{eq:fuzzy.sphere.def.rel}, as well as
\begin{align*}
  \htilde(\Pi(U),V)
  &=(\Pi_{ij}U_j)^\ast V_i = (U_j)^\ast X_jX_iV_i
    =(U_j)^\ast\Pi_{ji}V_i =  \htilde(U,\Pi(V)).
\end{align*}
The complementary projection $P=\id-\Pi$ classically defines the
module of sections of the tangent bundle, and we set
$\TStwoh=P((\Stwoh)^3)$, defining a finitely generated projective
module giving
\begin{align*}
  (\Stwoh)^3 = \TStwoh\oplus\Nh
\end{align*}
where $\Nh=\Pi((\Stwoh)^3)$. Since $P$ is an orthogonal projection, it
follows from
Proposition~\ref{prop:projection.regular.hermitian.module} that
$(\TStwoh,\htilde|_{\TStwoh})$ is a regular hermitian module.  In
matrix notation, the projector may be written as
\begin{align*}
  (P_{ij})=
  \begin{pmatrix}
    \mid-X^2 &  -XY     & -XZ\\
    -YX      & \mid-Y^2 & -YZ\\
    -ZX      & -ZY      & \mid-Z^2
  \end{pmatrix},
\end{align*}
with $\tr P = 3\cdot\mid-X^2-Y^2-Z^2=2\cdot\mid$, reflecting the fact that
the classical tangent space of $S^2$ is 2-dimensional,
and the module $\TStwoh$ is clearly generated by $\et_i=\eh_jP_{ji}$, giving
\begin{align*}
  &\et_1 = P(\eh_1) = (\mid-X^2,-YX,-ZX)\\
  &\et_2 = P(\eh_2) = (-XY,\mid-Y^2,-ZY)\\
  &\et_3 = P(\eh_3) = (-XZ,-YZ,\mid-Z^2).
\end{align*}
Similarly, it is easy to see that $\Nh$ is generated by
$\xi=\eh_iX^i=(X,Y,Z)$. Furthermore, $\xi$ is a basis of $\Nh$ since
\begin{align*}
  &\xi f = 0\implies
  \begin{cases}
    Xf = 0\\
    Yf = 0\\
    Zf = 0
  \end{cases}\implies
  \begin{cases}
    X^2f = 0\\
    Y^2f = 0\\
    Z^2f = 0
  \end{cases}\implies\\
  &\paraa{X^2+Y^2+Z^2}f = 0\implies f = 0,
\end{align*}
showing that $\Nh$ is a free module of rank 1.

In classical geometry, the tangent space of the sphere can be
generated by $e_1=(0,z,-y)$, $e_2=(-z,0,x)$ and $e_3=(y,-x,0)$; for
the fuzzy sphere, these vectors acquire a noncommutative correction.

\begin{proposition}\label{prop:e.et.relation}
  The elements $e_1,e_2,e_3\in(\Stwoh)^3$, defined by $e_i = \eps_{ijk}\et_jX_k$, giving
  \begin{equation}\label{eq:e.in.terms.of.et}
    \begin{split}
      &e_1 = \et_2Z-\et_3Y = (0,Z,-Y)-i\hbar\xi X\\
      &e_2 = \et_3X - \et_1Z = (-Z,0,X)-i\hbar\xi Y\\
      &e_3 = \et_1Y-\et_2X = (Y,-X,0)-i\hbar\xi Z,
    \end{split}
  \end{equation}
  generate the module $\TStwoh$. In particular,
  $\et_i = -\eps_{ijk}e_jX_k+i\hbar e_i$; i.e.
  \begin{equation}\label{eq:et.in.terms.of.e}
    \begin{split}
      &\et_1 = e_3Y - e_2Z + i\hbar e_1\\
      &\et_2 = e_1Z - e_3X + i\hbar e_2\\
      &\et_3 = e_2X - e_1Y + i\hbar e_3.
    \end{split}
  \end{equation}
\end{proposition}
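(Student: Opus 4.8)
The plan is to reduce everything to index computations resting on two structural facts about the generators $\et_i=\eh_j{P^j}_i$. Since ${P^j}_i=\delta_{ji}\mid-X_jX_i$ and $\eh_jX_j=\xi$, one has the clean rewriting $\et_i=\eh_i-\xi X_i$, and hence the tangency relation $\et_iX_i=\xi-\xi(X_iX_i)=0$ (using $X_iX_i=\mid$). I would record these first, together with the contraction identities $\eps_{ijk}\eps_{jkl}=2\delta_{il}$ and $\eps_{ijk}\eps_{jpq}=\delta_{kp}\delta_{iq}-\delta_{kq}\delta_{ip}$, and the commutator $[X_i,X_j]=i\hbar\eps_{ijk}X_k$; these are all the argument needs.

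For the explicit expressions \eqref{eq:e.in.terms.of.et}, I would substitute $\et_j=\eh_j-\xi X_j$ into $e_i=\eps_{ijk}\et_jX_k$ and split the result as $e_i=\eps_{ijk}\eh_jX_k-\xi\,\eps_{ijk}X_jX_k$. The first term is the classical tangent vector, e.g. $\eps_{1jk}\eh_jX_k=\eh_2Z-\eh_3Y=(0,Z,-Y)$. For the correction term, antisymmetrizing gives $\eps_{ijk}X_jX_k=\tfrac{1}{2}\eps_{ijk}[X_j,X_k]=\tfrac{i\hbar}{2}\eps_{ijk}\eps_{jkl}X_l=i\hbar X_i$, so the correction is exactly $-i\hbar\xi X_i$, reproducing the first line of \eqref{eq:e.in.terms.of.et}; the remaining two lines follow by cyclic relabelling.

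Since each $e_i=\eps_{ijk}\et_jX_k$ is a right-$\A$-linear combination of the generators $\et_j$ of $\TStwoh$, it lies in $\TStwoh$; to prove that the $e_i$ in fact generate $\TStwoh$, it suffices to recover each $\et_i$ from the $e_j$, which is the content of \eqref{eq:et.in.terms.of.e}. Here I would substitute $e_j=\eps_{jpq}\et_pX_q$ into $-\eps_{ijk}e_jX_k+i\hbar e_i$ and contract via $\eps_{ijk}\eps_{jpq}=\delta_{kp}\delta_{iq}-\delta_{kq}\delta_{ip}$. Using $X_qX_q=\mid$ this produces $\et_i-\et_pX_iX_p+i\hbar e_i$, so the claim reduces to showing $\et_pX_iX_p=i\hbar e_i$. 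This is the one genuinely noncommutative step and the main obstacle: I would commute $X_i$ past $X_p$ via $X_iX_p=X_pX_i+i\hbar\eps_{ipl}X_l$, giving $\et_pX_iX_p=(\et_pX_p)X_i+i\hbar\eps_{ipl}\et_pX_l=(\et_pX_p)X_i+i\hbar e_i$, and then invoke the tangency relation $\et_pX_p=0$ to kill the first term. This yields $-\eps_{ijk}e_jX_k+i\hbar e_i=\et_i$, so each $\et_i$ lies in the submodule generated by $\{e_j\}$; combined with $e_i\in\TStwoh$ this shows that $\{e_i\}$ generates $\TStwoh$, completing the proof.
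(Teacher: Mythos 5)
Your proposal is correct and follows essentially the same route as the paper: the key identity $\et_i=-\eps_{ijk}e_jX_k+i\hbar e_i$ is established by the same $\eps$-contraction, the same splitting $X_iX_k=[X_i,X_k]+X_kX_i$, and the same tangency relation $\et_kX_k=0$, with generation then deduced exactly as in the paper. The only cosmetic difference is that you verify \eqref{eq:e.in.terms.of.et} uniformly via $\et_j=\eh_j-\xi X_j$ and $\eps_{ijk}X_jX_k=\tfrac{1}{2}\eps_{ijk}[X_j,X_k]=i\hbar X_i$, whereas the paper checks one component relation explicitly and leaves the others as analogous.
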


\begin{proof}
  First of all, it is clear that $e_i=\eps_{ijk}\et_jX_k\in\TStwoh$
  since $\et_j\in\TStwoh$ for $j=1,2,3$. Next, let us prove one of the
  relations in equation \eqref{eq:e.in.terms.of.et}:
  \begin{align*}
    \et_2Z&-\et_3Y
    = (-XYZ,Z-Y^2Z,-ZYZ) - (-XZY,-YZY,Y-Z^2Y)\\
    &=\paraa{-X[Y,Z],Z-Y[Y,Z],-Y-Z[Y,Z]}=(0,Z,-Y)-i\hbar(X,Y,Z)X\\
    &=(0,Z,-Y)-i\hbar\xi X;
  \end{align*}
  the remaining relations are proven analogously.  Finally, we show that
  \begin{align*}
    -\eps_{ijk}e_jX_k+i\hbar e_i
    &= \eps_{jik}\eps_{jlm}\et_lX_mX_k+i\hbar e_i
      = \paraa{\delta_{il}\delta_{km}-\delta_{im}\delta_{kl}}\et_lX_mX_k+i\hbar e_i\\
    &=\et_iX_kX_k - \et_kX_iX_k+i\hbar e_i = \et_i-\et_k\paraa{[X_i,X_k]+X_kX_i}+i\hbar e_i\\
    &=\et_i - i\hbar\et_k\eps_{ikl}X_l-\et_kX_kX_i + i\hbar e_i = \et_i,
  \end{align*}
  since $\et_kX_k=0$. Hence, since $\{\et_1,\et_2,\et_3\}$ generate
  $\TStwoh$ it is clear that $\{e_1,e_2,e_3\}$ generate $\TStwoh$.
\end{proof}

\noindent
The hermitian form $h=\htilde|_{\TStwoh}$, in terms of the generators
$\et_1,\et_2,\et_2$ is given by
\begin{align*}
  h_{ij}=\htilde(\et_i,\et_j) = P_{ik}P_{kj}=P_{ij}  
\end{align*}
with $h^{ij}=P_{ij}$, and in terms of $e_1,e_2,e_3$ one has
\begin{align*}
  &h(e_1,e_1) = \mid-X^2-\hbar^2X^2 & &h(e_1,e_2) = -YX-\hbar^2XY\\
  &h(e_2,e_2) = \mid-Y^2-\hbar^2Y^2 & &h(e_1,e_3) = -ZX-\hbar^2XZ\\
  &h(e_3,e_3) = \mid-Z^2-\hbar^2Z^2 & &h(e_2,e_3) = -ZY-\hbar^2YZ,
\end{align*}
which can be written in a more compact form as
\begin{align*}
  g_{ij} = h(e_i,e_j) = P_{ji}-\hbar^2\Pi_{ij}.
\end{align*}
If $\et_i=e_jA_{ji}$ and $e_i=\et_jB_{ji}$ then
$\et_i=\et_kB_{kj}A_{ji}$ and
\begin{align*}
  &g_{ij} = (B_{ki})^\ast h_{kl}B_{lj}\\
  &g^{ij} = A_{ik}h^{kl}(A_{jl})^\ast.
\end{align*}
With $A_{ij}=\eps_{ijk}X_k+i\hbar\delta_{ij}$
(cf. Proposition~\ref{prop:e.et.relation}) one obtains
\begin{align*}
  A_{ik}h^{kl}(A_{jl})^\ast =
  A_{ik}P_{kl}(A_{jl})^\ast = P_{ij}-(1-\hbar^2)\Pi_{ij}+\Pi_{ji}+\hbar^2\delta_{ij}.
\end{align*}
However, since
\begin{align*}
  e_iA_{ik}h^{kl}(A_{jl})^\ast = (1+\hbar^2)e_j+e_i\Pi_{ji},
\end{align*}
due to $e_i\Pi_{ij}=0$ one can equally well take
\begin{align}
  g^{ij} = (1+\hbar^2)\delta_{ij}+\Pi_{ji},
\end{align}
satisfying $e_ig^{ij}g_{jk}=e_k$ and $(g^{ij})^\ast=g^{ji}$.

Let us now derive an explicit expression for hermitian connections on
$(\TStwoh,h)$. First, we prove the following lemma.

\begin{lemma}\label{lemma:etk.dP}
  For $i,j=1,2,3$ one has
  \begin{align*}
    \et_k\d_iP_{kj} = -e_iX_j.
  \end{align*}
\end{lemma}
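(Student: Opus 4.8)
The plan is to prove $\et_k\d_iP_{kj} = -e_iX_j$ by direct computation, working entirely in the free module $(\Stwoh)^3$ and using the explicit description of the projector entries $P_{kj}$ together with the commutation relations \eqref{eq:fuzzy.sphere.def.rel}. First I would record that $\d_i$ acts as $\d_i(f)=\tfrac{1}{i\hbar}[X_i,f]$, so that $\d_iP_{kj}$ can be computed from $P_{kj}=\delta_{kj}\mid - X_kX_j$; since $\d_i(\delta_{kj}\mid)=0$ this reduces to $\d_iP_{kj} = -\d_i(X_kX_j) = -\big(\d_i(X_k)X_j + X_k\d_i(X_j)\big)$. Using $\d_i(X_k)=\tfrac{1}{i\hbar}[X_i,X_k]=\eps_{ikl}X_l$ (from the first relation in \eqref{eq:fuzzy.sphere.def.rel}), this gives the clean expression
\begin{align*}
  \d_iP_{kj} = -\eps_{ikl}X_lX_j - \eps_{ijl}X_kX_l.
\end{align*}

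Next I would contract with $\et_k$ on the left. The key simplification is the identity $\et_kX_k=0$, already established in the proof of Proposition~\ref{prop:e.et.relation}; more generally $\et_k X_l$ need not vanish, but the second term above will collapse because it carries $X_k$ adjacent to $\et_k$. Explicitly, $\et_k\big(-\eps_{ijl}X_kX_l\big) = -\eps_{ijl}(\et_kX_k)X_l = 0$, so only the first term survives:
\begin{align*}
  \et_k\d_iP_{kj} = -\eps_{ikl}\et_kX_lX_j.
\end{align*}
It then remains to recognize $-\eps_{ikl}\et_kX_l$ as $-e_i$. Since $e_i=\eps_{ijk}\et_jX_k$ by definition, relabelling the dummy indices gives $\eps_{ikl}\et_kX_l = e_i$, hence $\et_k\d_iP_{kj} = -e_iX_j$, which is exactly the claim.

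The main obstacle I anticipate is purely bookkeeping: keeping the Levi-Civita symbol index contractions straight while matching them against the definition $e_i=\eps_{ijk}\et_jX_k$, and making sure the two Leibniz terms are handled asymmetrically (one vanishes via $\et_kX_k=0$, the other is the answer). One subtlety worth checking carefully is whether $\et_k X_l X_j$ requires any reordering: no further commutators are needed, since the surviving term is already in the form $(\eps_{ikl}\et_kX_l)X_j = e_iX_j$ with the $X_j$ sitting on the right exactly as in the target expression. Thus the computation is self-contained and does not rely on the quadratic relation $\sum_k X_k^2=\mid$ beyond what is already encoded in $\et_kX_k=0$.
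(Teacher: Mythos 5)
Your proof is correct and follows essentially the same route as the paper's: both expand $\d_iP_{kj}=-\d_i(X_kX_j)$ by the Leibniz rule, kill the term with $X_k$ adjacent to $\et_k$ using $\et_kX_k=0$, and identify the surviving term $-\eps_{ikl}\et_kX_lX_j$ with $-e_iX_j$ via the definition $e_i=\eps_{ijk}\et_jX_k$. The only cosmetic difference is that you explicitly write out $\d_iX_j=\eps_{ijl}X_l$ in the vanishing term before contracting, whereas the paper leaves it as $X_k(\d_iX_j)$.
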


\begin{proof}
  One computes
  \begin{align*}
    \et_k\d_iP_{kj}
    &= \et_k\d_i\paraa{\delta_{kj}\mid-X_kX_j}
      =-\et_k\d_i\paraa{X_kX_j}\\
    &=-\et_k\paraa{(\d_iX_k)X_j+X_k(\d_iX_j)}
      =-\et_k\eps_{ikl}X_lX_j = -e_iX_j,
  \end{align*}
  since $\et_kX_k=0$ and $\d_iX_k=\eps_{ikl}X_l$.
\end{proof}

\noindent
Now, it follows from eq. \eqref{eq:connection.projective.module} and
Lemma~\ref{lemma:etk.dP} that for arbitrary $\gamma_{i,jk}\in\Stwoh$,
such that $\gamma_{i,jk}^\ast=\gamma_{i,kj}$, a hermitian connection
on $\TStwoh$ is given by
\begin{align*}
  \nabla_{\d_i}\et_j &= -e_iX_j+i\et_k\gamma_{i,kj}-i\et_k\gamma_{i,kl}X_lX_j\\
  \nabla_{\d_i}e_j &= \et_iX_j + i\et_m\eps_{jkl}\gamma_{i,mk}X_l
                     -i\hbar\et_m\paraa{\eps_{imk}+i\gamma_{i,mk}}X_kX_j.
\end{align*}
For instance, choosing $\gamma_{i,jk}=0$ one obtains
\begin{align*}
  \nabla^0_{\d_i}\et_j = -e_iX_j\qtext{and}
  \nabla^0_{\d_i}e_j = \et_iX_j-i\hbar e_iX_j = -\eps_{ikl}e_kX_lX_j
\end{align*}
giving the curvature
\begin{align*}
  R^0(\d_i,\d_j)\et_k
  &= -\eps_{ikl}e_jX_l+\eps_{jkl}e_iX_l+i\hbar\paraa{e_iX_j-e_jX_i}X_k\\
  R^0(\d_i,\d_j)e_k
  &= -e_ih(e_j,e_k)+e_jh(e_i,e_k),
\end{align*}
as well as
\begin{align*}
  R^0_{klij} = h\paraa{e_k,R^0(\d_i,\d_j)e_l} = -h(e_k,e_i)h(e_j,e_l)+h(e_k,e_j)h(e_i,e_l).
\end{align*}
Another choice is given by $\gamma_{i,jk}=i\eps_{ijk}$, clearly
satisfying $\gamma_{i,jk}^\ast=\gamma_{i,kj}$, giving
\begin{align*}
  \nabla^{\eps}_{\d_i}\et_j = \eps_{ijk}\et_k\qquad
  \nabla^{\eps}_{\d_i}e_j = \eps_{ijk}e_k
\end{align*}
and
\begin{align*}
  R^{\eps}(\d_i,\d_j)e_k = R^{\eps}(\d_i,\d_j)\et_k = 0,
\end{align*}
for $i,j,k=1,2,3$.

Let us proceed with another example based on the projection
$P:(\Stwoh)^2\to(\Stwoh)^2$, given by
\begin{align*}
  (P_{ab}) = \frac{1}{\sqrt{4+\hbar^2}}
  \begin{pmatrix}
    \alpha(\hbar)\mid + Z & X+iY\\
    X-iY & \alpha(\hbar)\mid- Z
  \end{pmatrix}
\end{align*}
with $\alpha(\hbar)=\frac{1}{2}(\sqrt{4+\hbar^2}-\hbar)$ and
$\tr P=1-\hbar/\sqrt{4+\hbar^2}$ (tending to $1$ as $\hbar\to 0$). This
is known as the ``monopole projection'', and one easily checks that
$P^2=P$ using that
\begin{align*}
  1+\alpha(\hbar)^2=\alpha(\hbar)\sqrt{4+\hbar^2}\qand
  2\alpha(\hbar)+h=\sqrt{4+\hbar^2}.
\end{align*}
Furthermore, since $P_{ab}^\ast=P_{ba}$ for $a,b=1,2$, this projection
is orthogonal with respect to the hermitian form
\begin{align*}
  \htilde\paraa{\eh_aU^a,\eh_bV^b} = \sum_{a=1}^2(U^a)^\ast(V^a)
\end{align*}
where $\{\eh_1,\eh_2\}$ denotes a basis of $(\Stwoh)^2$. Thus, with
$M=P((\Stwoh)^2)$, it follows that $(M,\htilde|_{M})$ is a regular
hermitian module. Metric connections are given by
\eqref{eq:connection.projective.module}
\begin{align*}
  \nabla_{\d_i}\et_a = \et_b\d_iP_{ba} + i\et_b\gamma_{i,bc}P_{ca},
\end{align*}
and for $\gamma_{i,bc}=0$ one obtains
\begin{alignat*}{2}
  &\nabla_{\d_1}\et_1 = -\et_1Y-i\et_2Z &\qquad &\nabla_{\d_1}\et_2 = i\et_1Z+\et_2Y\\
  &\nabla_{\d_2}\et_1 = \et_1X-\et_2Z & &\nabla_{\d_2}\et_2 = -\et_1Z-\et_2X\\
  &\nabla_{\d_3}\et_1 = i\et_2(X-iY)& &\nabla_{\d_3}\et_2 = -i\et_1(X+iY) 
\end{alignat*}
together with the curvature
\begin{align*}
  R(\d_1,\d_2)\et_1
  &= \tfrac{1}{4+\hbar^2}\parab{i\et_1\paraa{-\{Z,Z\}+\hbar Z}-\et_2\paraa{\{Y,Z\}+i\{Z,X\}}}\\
  R(\d_1,\d_2)\et_2
  &= \tfrac{1}{4+\hbar^2}\parab{\et_1\paraa{\{Y,Z\}-i\{Z,X\}}+i\et_2\paraa{\{Z,Z\}+\hbar Z}}\\
  R(\d_2,\d_3)\et_1
  &=\tfrac{1}{4+\hbar^2}\parab{i\et_1\paraa{-\{Z,X\}+\hbar X}-\et_2\paraa{\{X,Y\}+i\{X,X\}}} \\
  R(\d_2,\d_3)\et_2
  &=\tfrac{1}{4+\hbar^2}\parab{\et_1\paraa{\{X,Y\}-i\{X,X\}}+i\et_2\paraa{\{Z,X\}+\hbar X}} \\
  R(\d_3,\d_1)\et_1
  &=\tfrac{1}{4+\hbar^2}\parab{i\et_1\paraa{-\{Y,Z\}+\hbar Y}-\et_2\paraa{\{Y,Y\}+i\{X,Y\}}} \\
  R(\d_3,\d_1)\et_2
  &= \tfrac{1}{4+\hbar^2}\parab{\et_1\paraa{\{Y,Y\}-i\{X,Y\}}+i\et_2\paraa{\{Y,Z\}+\hbar Y}}
\end{align*}
where $\{A,B\}=AB+BA$ denotes the anticommutator.

Let us mention that the Riemannian geometry of a 3D calculus
over the fuzzy sphere has recently been considered
\cite{mt:lc.fuzzy.sphere}. Although there are several similarities
with the computations above, the approach is quite different and based
on a free module of rank 3 representing the differential forms.  It
would, however, be interesting to find a closer connection between the
two examples.

\section{Embedded noncommutative manifolds}
\label{sec:embedded.noncommutative.manifolds}

\noindent
In this section we shall be interested in hermitian modules arising
from a choice of $n$ elements in the algebra $\A$. One may think about
these elements as analogues of embedding coordinates in $\reals^n$,
inducing a metric on the embedded manifold. A special case of this
construction was used in \cite{ach:nms} to generate noncommutative
analogues of minimal surfaces in Euclidean space, which we will come
back to in Section~\ref{sec:embedded.nms}.

\begin{definition}
  A triple $\Sigma = (\A,\g,\{X^1,\ldots,X^n\})$, where $(\A,\g)$ is a
  Lie pair and $X^1,\ldots,X^n\in\A$ are hermitian elements, is called
  an \emph{embedded noncommutative manifold}.
\end{definition}

\noindent Given an embedded noncommutative manifold
$\Sigma=(\A,\g,\{X^1,\ldots,X^n\})$ and a basis $\{\eh_i\}_{i=1}^n$ of the
free module $\A^n$, we define $\varphi:\g\to\A^n$ by 
\begin{align}\label{eq:def.g.varphi}
  \varphi(\d)= \d X = \eh_i\d X^i
\end{align}
for $\d\in\g$. The (right) module $\TSigma$ generated by the image of
$\varphi$ will be referred to as the \emph{module of vector fields of
  $\Sigma$}. Recall that since $\g$ is $\ast$-closed there exists a
basis of $\g$ consisting of hermitian derivations $\{\d_a\}_{a=1}^m$.
The module $\TSigma$ is clearly generated by $\{\varphi(\d_a)\}_{a=1}^m$, and we
shall in the following write $e_a=\varphi(\d_a)=\eh_i\d_aX^i$, as well
as $\d_ae_b=\eh_i\d_a\d_bX^i$.

Furthermore, let $\hz:\A^n\times A^n\to\A$ denote the hermitian form
given by
\begin{align*}
  \hz(U,V) = \sum_{i=1}^m(U^i)^\ast V^i
\end{align*}
for $U=\eh_iU^i$ and $V=\eh_iV^i$, and the restriction of $\hz$ to
$\TSigma$ will be denoted by $h$; with respect to a hermitian basis $\{\d_a\}_{a=1}^m$ of
$\g$, we write
\begin{align*}
  h_{ab} = h(e_a,e_b) = h\paraa{\varphi(\d_a),\varphi(\d_b)}.
\end{align*}
Let us note that he above framework resembles the concept of
\emph{real calculi} developed in
\cite{aw:curvature.three.sphere,aw:cgb.sphere,atn:minimal.embeddings}. However,
in the current setting there are no assumptions on the reality of the
hermitian form.

In general, we shall be interested in embedded noncommutative
manifolds for which $(\TSigma,h)$ is a regular hermitian module,
implying that one can apply the results of
Section~\ref{sec:lie.pairs}.

\begin{definition}
  An embedded noncommutative manifold
  $\Sigma = (\A,\g,\{X^1,\ldots,X^n\})$ is called \emph{regular} if
  $(\TSigma,h)$ is a regular hermitian module.
\end{definition}

\noindent
In view of Proposition~\ref{prop:huab.equiv.projective}, an embedded
noncommutative manifold is regular if and only if there exists
$h^{ab}\in\A$ such that $e_ah^{ab}h_{bc}=e_c$ for
$a=1,\ldots,m$. Moreover, for a regular embedded noncommutative
manifold, it follows from Theorem~\ref{thm:M.projective.embedded} that
there exist hermitian connections on $(\TSigma,h)$.

Due to the map $\varphi:\g\to\TSigma$, as defined in
\eqref{eq:def.g.varphi}, one can introduce a concept of torsion
freeness of connections (cf. also \cite{aw:curvature.three.sphere}).

\begin{definition}
  A connection $\nabla$ on $\TSigma$ is called \emph{torsion free} if
  \begin{align*}
    \nabla_{\d_1}\varphi(\d_2)-\nabla_{\d_2}\varphi(\d_1) = \varphi([\d_1,\d_2]) 
  \end{align*}
  for all $\d_1,\d_2\in\g$. Furthermore, a torsion free hermitian
  connection on $(\TSigma,h)$ is called a \emph{Levi-Civita
    connection}.
\end{definition}

\noindent
In the following we will prove that there exist Levi-Civita
connections on a regular embedded noncommutative manifold. Let us
start by providing an explicit expression for the projection operator
onto $\TSigma$.

\begin{proposition}\label{prop:projection.embedded.manifold}
  Let $\Sigma = (\A,\g,\{X^1,\ldots,X^n\})$ be a regular embedded
  noncommutative manifold and define $p:\A^n\to\A^n$ as
  \begin{align}\label{eq:p.def.huab}
    p(U)=e_ah^{ab}\hz(e_b,U).    
  \end{align}
  where $h^{ab}$ is such that $e_ah^{ab}h_{bc}=e_c$ for
  $c=1,\ldots,n$.  Then $p(\A^n)=\TSigma$ and $p$ is an orthogonal
  projection with respect to $\hz$.
\end{proposition}

\begin{proof}
  Let us first show that $p^2=p$. For $U\in\A^n$ one obtains
  \begin{align*}
    p^2(U)
    &= e_ah^{ab}\hz\paraa{e_b,e_ph^{pq}\hz(e_q,U)}
      = e_ah^{ab}\hz(e_b,e_p)h^{pq}\hz(e_q,U)\\
    &= e_ah^{ab}h_{bp}h^{pq}\hz(e_q,U)
      = e_ph^{pq}\hz(e_q,U) = p(U)
  \end{align*}
  by using that $e_ah^{ab}h_{bc}=e_c$. Moreover, since
  $\Im(p)\subseteq \TSigma$ and
  \begin{align*}
    p(e_a) = e_bh^{bc}\hz(e_c,e_a)
    = e_bh^{bc}h_{ca} = e_a
  \end{align*}
  we conclude that $p(\A^n)\simeq\TSigma$. Finally, let us show that $p$ is
  orthogonal with respect to the hermitian form $\hz$. One computes
  \begin{align*}
    \hz\paraa{p(U),V}
    &=\hz(e_b,U)^\ast(h^{ab})^\ast\hz(e_a,V)
      =\hz(U,e_b)h^{ba}\hz(e_a,V)\\
    &=\hz\paraa{U,e_bh^{ba}\hz(e_a,V)}
    =\hz\paraa{U,p(V)}
  \end{align*}
  for $U,V\in\A^n$.  
\end{proof}

\noindent
Having the projection at hand allows us to construct hermitian
connections on $(\TSigma,h)$ by using Proposition~\ref{prop:hermitian.connections.free.module}.

\begin{proposition}\label{prop:TSigma.metric.connection}
  Let $\Sigma = (\A,\g,\{X^1,\ldots,X^n\})$ be a regular embedded
  noncommutative manifold and let $\{\d_a\}_{a=1}^m$ be a hermitian
  basis of $\g$. For any $\gamma_{a,ij}\in\A$ such that
  $\gamma_{a,ij}^\ast=\gamma_{a,ji}$,
  \begin{align*}
    \nabla_{\d_a}(e_bm^b) = e_b\d_am^b + e_ch^{cp}\paraa{h^0(e_p,\d_ae_b)+i\gamma_{a,pb}}m^b,
  \end{align*}
  where $\gamma_{a,bc} = (e_b^i)^\ast\gamma_{a,ij}e_c^j$, defines a
  hermitian connection on $(\TSigma,h)$.
\end{proposition}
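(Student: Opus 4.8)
The plan is to construct the connection on the free module $\A^n$ with basis $\{\eh_i\}_{i=1}^n$ and then project it down to $\TSigma$ using the orthogonal projection $p$ from Proposition~\ref{prop:projection.embedded.manifold}. Concretely, I would first build a hermitian connection $\nablat$ on $(\A^n,\hz)$ via Proposition~\ref{prop:hermitian.connections.free.module}. Since $\hz_{ij}=\delta_{ij}\mid$ is constant, the connection coefficients in Proposition~\ref{prop:hermitian.connections.free.module} simplify: taking the skew part governed by $\gamma_{a,ij}$ with $\gamma_{a,ij}^\ast=\gamma_{a,ji}$, one gets $\nablat_{\d_a}\eh_i = \eh_j\, i\gamma_{a,ji}$ (the $\tfrac12\d_a\hz_{ij}$ term vanishes). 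By Proposition~\ref{prop:hermitian.connections.free.module} this is a hermitian connection on the free regular hermitian module $(\A^n,\hz)$, and by Proposition~\ref{prop:projected.connection.hermitian}, $\nabla = p\circ\nablat$ is then automatically a hermitian connection on $(\TSigma,h)$ with $h=\hz|_{\TSigma}$. This reduces the entire problem to a bookkeeping computation verifying that $p\circ\nablat$, when written on the generators $e_b=\eh_i\d_a X^i$, produces exactly the claimed formula.

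The main computation is to expand $\nabla_{\d_a}(e_b m^b) = p\paraa{\nablat_{\d_a}(e_b m^b)}$ and match it with the stated expression. I would start from the Leibniz rule: $\nablat_{\d_a}(e_b m^b) = (\nablat_{\d_a} e_b)m^b + e_b\,\d_a m^b$. Writing $e_b = \eh_i e_b^i$ with $e_b^i=\d_b X^i$, the term $\nablat_{\d_a} e_b$ splits into the derivative part $\eh_i\,\d_a e_b^i = \eh_i\,\d_a\d_b X^i = \d_a e_b$ and the $\gamma$-contribution $\eh_j\, i\gamma_{a,ji}e_b^i$. Applying $p(U)=e_c h^{cp}\hz(e_p,U)$ and using $\hz$-orthonormality of the $\eh_i$, the derivative part becomes $e_c h^{cp}\hz(e_p,\d_a e_b)$ and the $\gamma$-part becomes $e_c h^{cp}(e_p^i)^\ast i\gamma_{a,ij}e_b^j = e_c h^{cp}\, i\gamma_{a,pb}$ by the abbreviation $\gamma_{a,pb}=(e_p^i)^\ast\gamma_{a,ij}e_b^j$ defined in the statement. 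The final piece is $p(e_b\,\d_a m^b)=e_b\,\d_a m^b$, since $e_b\in\TSigma$ and $p$ fixes $\TSigma$ by Proposition~\ref{prop:projection.embedded.manifold}. Assembling these three pieces yields exactly the asserted formula.

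The one point requiring genuine care — and the main obstacle — is verifying that $\gamma_{a,pb}=(e_p^i)^\ast\gamma_{a,ij}e_b^j$ inherits the correct hermiticity and that the projected connection's coefficients land consistently among the generators $\{e_a\}$ rather than spilling into the normal complement. Hermiticity is immediate: $\gamma_{a,pb}^\ast = (e_b^j)^\ast\gamma_{a,ij}^\ast e_p^i = (e_b^j)^\ast\gamma_{a,ji}e_p^i = \gamma_{a,bp}$, using $\gamma_{a,ij}^\ast=\gamma_{a,ji}$. That the connection maps into $\TSigma$ is guaranteed structurally because $p$ projects onto $\TSigma=p(\A^n)$, so every term on the right-hand side manifestly lies in $\TSigma$. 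The remaining subtlety is purely that the $e_b$ are generators and not necessarily a basis, so I must rely on Proposition~\ref{prop:projected.connection.hermitian} to certify well-definedness and the hermitian property rather than attempting to pin down coefficients uniquely; the explicit formula is then simply a representative expression, valid for any representatives $m^b$ of a given module element.
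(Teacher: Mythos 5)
Your proposal is correct and follows essentially the same route as the paper: build the hermitian connection $\nabla^0_{\d_a}(\eh_iU^i)=\eh_i\d_aU^i+i\eh_j\gamma_{a,ji}U^i$ on the free module $(\A^n,\hz)$ via Proposition~\ref{prop:hermitian.connections.free.module} (with the $\d_a\hz_{ij}$ term vanishing), set $\nabla=p\circ\nabla^0$ using the orthogonal projection of Proposition~\ref{prop:projection.embedded.manifold}, invoke Proposition~\ref{prop:projected.connection.hermitian} for the hermitian property, and then expand $p(\nabla^0_{\d_a}(e_bm^b))$ on the generators to recover the stated formula. Your closing remark about well-definedness (the formula being a representative expression since $\nabla^0$ is defined on all of $\A^n$, with the generators $e_b$ not assumed to be a basis) is a point the paper leaves implicit, but it is handled correctly.
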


\begin{proof}
  For $h^0_{ij}=\delta_{ij}\mid$ (implying $\d_ah_{ij}=0$), let
  \begin{align*}
    \nabla^0_{\d_a}(\eh_iU^i) = \eh_i\d_aU^i + i\eh_i\delta^{ij}\gamma_{a,jk}U^k
  \end{align*}
  be a metric connection as constructed in
  Proposition~\ref{prop:hermitian.connections.free.module}.  Since
  $\Sigma$ is a regular embedded noncommutative manifold, one can use
  the projection $p:\A^n\to A^n$, defined in \eqref{eq:p.def.huab},
  and set
  \begin{align*}
    \nabla = p\circ\nabla^0.
  \end{align*}
  It follows from
  Proposition~\ref{prop:projected.connection.hermitian} and
  Proposition~\ref{prop:projection.embedded.manifold} that $\nabla$ is
  a hermitian connection on $(\TSigma,h)$.  Let us now compute
  $\nabla_{\d_a}e_b$
  \begin{align*}
    \nabla_{\d_a}e_b
    &= e_ch^{cp}h^0\paraa{e_p,\nabla^0_{\d_a}e_b}
    = e_ch^{cp}h^0\paraa{e_p,\eh_i\d_ae_b^i
      +i\eh_i\delta^{ij}\gamma_{a,jk}e_b^k}\\
    &= e_ch^{cp}\parab{h^0(e_p,\d_ae_b)+i(e_p^j)^\ast\gamma_{a,jk}e_b^k}
      = e_ch^{cp}\parab{h^0(e_p,\d_ae_b)+i\gamma_{a,pb}}
  \end{align*}
  where $\gamma_{a,pb}=(e_p^j)^\ast\gamma_{a,jk}e_b^k$.
\end{proof}

\noindent 
It turns out that requiring extra symmetry properties of
$\gamma_{a,bc}$ is enough to guarantee that the hermitian connection
constructed in Proposition~\ref{prop:TSigma.metric.connection} is a
Levi-Civita connection.

\begin{theorem}\label{thm:Levi.Civita}
  Let $\Sigma = (\A,\g,\{X^1,\ldots,X^n\})$ be a regular embedded
  noncommutative manifold and let $\{\d_a\}_{a=1}^m$ be a hermitian
  basis of $\g$. For any $\gamma_{a,ij}\in\A$ such that
  $\gamma_{a,ij}^\ast=\gamma_{a,ji}$ and
  $\gamma_{a,bc}=\gamma_{c,ba}$, where
  $\gamma_{a,bc}=(e_b^i)^\ast\gamma_{a,ij}e_c^j$,
  \begin{align}\label{eq:levi.civita.connection}
    \nabla_{\d_a}(e_bm^b) = e_b\d_am^b +
    e_c\hsigma^{cp}\paraa{h^0(e_p,\d_ae_b)
    +i\gamma_{a,pb}}m^b
  \end{align}
  defines a Levi-Civita connection on $(\TSigma,h)$. In particular,
  choosing $\gamma_{a,ij}=0$, we conclude that there exists a
  Levi-Civita connection on every regular embedded noncommutative
  manifold.
\end{theorem}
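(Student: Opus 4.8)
The plan is to verify that the connection in equation~\eqref{eq:levi.civita.connection} is both hermitian and torsion free. The hermitian property comes essentially for free: the formula is precisely the one from Proposition~\ref{prop:TSigma.metric.connection} (with $\hsigma^{cp}$ playing the role of $h^{cp}$), and that proposition already guarantees compatibility with $h$ whenever $\gamma_{a,ij}^\ast=\gamma_{a,ji}$. The symmetry condition $\gamma_{a,ij}^\ast=\gamma_{a,ji}$ transfers to $\gamma_{a,bc}^\ast=\gamma_{b,ac}$ after contracting with the real embedding data $e_b^i$ (recall the $X^i$, hence the $e_b^i=\d_bX^i$, behave well under $\ast$), so the hermitian half of the claim requires essentially no new work beyond citing the earlier proposition. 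Thus the entire content of the theorem lies in the \emph{torsion free} condition, and that is where the extra hypothesis $\gamma_{a,bc}=\gamma_{c,ba}$ must be used.

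For the torsion computation, I would write out both $\nabla_{\d_a}\varphi(\d_b)=\nabla_{\d_a}e_b$ and $\nabla_{\d_b}e_a$ using \eqref{eq:levi.civita.connection} (taking $m^b$ to be the constant coordinate vector, so the $e_b\d_am^b$ term drops), and subtract. The first point is to identify $\varphi([\d_a,\d_b])$: since $\varphi(\d)=\eh_i\d X^i$ and the $X^i$ are fixed algebra elements, $\varphi([\d_a,\d_b])=\eh_i[\d_a,\d_b]X^i=\eh_i(\d_a\d_b-\d_b\d_a)X^i=\d_ae_b-\d_be_a$, which is a \emph{symmetric-in-the-second-derivative} object. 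So I expect the difference $\nabla_{\d_a}e_b-\nabla_{\d_b}e_a$ to split naturally into the part coming from $h^0(e_p,\d_ae_b)$ and the part coming from $i\gamma_{a,pb}$. The key structural observation is that $\d_ae_b=\eh_i\d_a\d_bX^i$ lies in the free module and, crucially, $p(\d_ae_b)$ need not equal $\d_ae_b$; the combination $e_c\hsigma^{cp}h^0(e_p,\d_ae_b)$ is exactly the projection $p(\d_ae_b)$ onto $\TSigma$. Hence the $h^0$-contribution to the torsion is $p(\d_ae_b-\d_be_a)=p\bigl(\varphi([\d_a,\d_b])\bigr)=\varphi([\d_a,\d_b])$, the last equality holding because $\varphi([\d_a,\d_b])\in\TSigma$ is already fixed by $p$.

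It remains to show that the $\gamma$-contribution to the torsion vanishes, i.e. that $e_c\hsigma^{cp}(\gamma_{a,pb}-\gamma_{b,pa})=0$. This is where the hypothesis $\gamma_{a,bc}=\gamma_{c,ba}$ does its job: rewriting $e_c\hsigma^{cp}\gamma_{a,pb}$ in terms of the index-lowered $\gamma_{a,pb}$ and using the symmetry to swap the roles of the outer derivation index and the free-module index, the two terms cancel. I would phrase this by noting that $e_c\hsigma^{cp}\gamma_{a,pb}$, once $\gamma_{a,pb}=\gamma_{b,pa}$ is applied, becomes symmetric under $a\leftrightarrow b$, so its antisymmetrization is zero. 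The main obstacle, and the step demanding the most care, is bookkeeping the two different index types: the Latin indices $i,j$ ranging over the ambient $\A^n$ versus the indices $a,b,c,p$ labelling the generators $e_a$ of $\TSigma$, together with the distinction between $\gamma_{a,ij}$ and its contracted version $\gamma_{a,bc}=(e_b^i)^\ast\gamma_{a,ij}e_c^j$. I must make sure the symmetry $\gamma_{a,bc}=\gamma_{c,ba}$ is applied to the contracted quantity (not to the uncontracted $\gamma_{a,ij}$, for which only $\gamma_{a,ij}^\ast=\gamma_{a,ji}$ holds) and that the contraction with $\hsigma^{cp}$ is compatible with the pseudo-inverse relation $e_c\hsigma^{cp}h_{pb}=e_b$ from Proposition~\ref{prop:huab.equiv.projective}. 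Once the indices are tracked correctly, both contributions fall into place and the final clause (setting $\gamma_{a,ij}=0$, which trivially satisfies both symmetry conditions) yields the existence of a Levi-Civita connection on every regular embedded noncommutative manifold.
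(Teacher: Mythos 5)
Your proposal is correct and matches the paper's own argument: metric compatibility is quoted from Proposition~\ref{prop:TSigma.metric.connection}, and torsion freeness follows from $\varphi([\d_a,\d_b])=\d_ae_b-\d_be_a$ together with the cancellation $e_c\hsigma^{cp}(\gamma_{a,pb}-\gamma_{b,pa})=0$; your identification of $e_c\hsigma^{cp}h^0(e_p,\cdot)$ with the projection $p$ of Proposition~\ref{prop:projection.embedded.manifold} (which fixes $\TSigma$ pointwise) is just a repackaging of the paper's index computation $e_ch^{cp}h_{pr}f_{ab}^r=e_rf_{ab}^r$. One immaterial slip: contracting $\gamma_{a,ij}^\ast=\gamma_{a,ji}$ with the hermitian elements $e_b^i$ yields $\gamma_{a,bc}^\ast=\gamma_{a,cb}$ (cf.\ Remark~\ref{rem:gamma.symmetric}), not $\gamma_{b,ac}$, but nothing in your argument relies on that claim.
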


\begin{proof}
  It follows from Proposition~\ref{prop:TSigma.metric.connection} that
  \eqref{eq:levi.civita.connection} defines a metric connection on
  $\TSigma$. The connection is torsion free if
  $\nabla_{\d_a}e_b-\nabla_{\d_b}e_a-e_cf_{ab}^c=0$ where
  $[\d_a,\d_b]=f_{ab}^c\d_c$. Noting that
  \begin{align*}
    h^0(e_p,\d_ae_b)
    &= h^0\paraa{e_p,\eh_i\d_a\d_bX^i}
      =h^0\paraa{e_p,\eh_i\d_b\d_aX^i+\eh_if_{ab}^c\d_cX^i}\\
    &= h^0(e_p,\d_be_a) + h^0(e_p,e_c)f_{ab}^c,
  \end{align*}
  one obtains
  \begin{align*}
    \nabla_{\d_a}e_b-\nabla_{\d_b}e_a-e_cf_{ab}^c
    &= e_ch^{cp}\paraa{h(e_p,e_r)f_{ab}^r+i\gamma_{a,pb}-i\gamma_{b,pa}}-e_cf_{ab}^c\\
    &=ie_ch^{cp}\paraa{\gamma_{a,pb}-\gamma_{b,pa}} = 0
  \end{align*}
  if $\gamma_{a,pb}=\gamma_{b,pa}$.
\end{proof}

\begin{remark}\label{rem:gamma.symmetric}
  Note that $\gamma_{a,bc}$ in Theorem~\ref{thm:Levi.Civita} fulfills
  $\gamma_{a,bc}^\ast=\gamma_{a,cb}$ and
  $\gamma_{a,bc}=\gamma_{c,ba}$, implying that
  \begin{align*}
    \gamma_{a,cb} = \gamma_{b,ca} = \gamma_{b,ac}^\ast
    =\gamma_{c,ab}^\ast = \gamma_{c,ba} = \gamma_{a,bc}
  \end{align*}
  which, together with $\gamma_{a,bc}=\gamma_{c,ba}$, implies that
  $\gamma_{a,bc}$ is completely symmetric in all three indices; i.e.
  $\gamma_{a,bc} = \gamma_{b,ac} = \gamma_{c,ba} = \gamma_{a,cb}$.
  Consequently, if $\{e_a\}_{a=1}^m$ is a basis of $\TSigma$, one can
  define a Levi-Civita connection via
  \eqref{eq:levi.civita.connection} for any choice of hermitian
  $\gamma_{a,bc}\in\A$ such that $\gamma_{a,bc}$ is symmetric in all
  three indices. Hence, in this case the Levi-Civita connection is
  determined by the choice of
  \begin{align*}
    \frac{(m+2)!}{3!(m-1)!}
  \end{align*}
  hermitian elements from $\A$.
\end{remark}

\section{Embedded noncommutative minimal surfaces}
\label{sec:embedded.nms}

\noindent
Let us now turn to a particular class of noncommutative embedded
manifolds appearing in \cite{ach:nms}. For $\hbar>0$, let $\Wh$ denote
the first Weyl algebra, i.e $\Wh$ is generated by $U$ and $V$,
satisfying $[U,V]=i\hbar\mid$; moreover, $\Wh$ is a $\ast$-algebra with the
involution defined as $U^\ast=U$ and $V^\ast=V$. The Weyl
algebra satisfies the Ore condition implying that it has a fraction
field, which will be denoted by $\Fh$.

Let $\g_2$ denote the (abelian) Lie algebra generated by the two
hermitian derivations
\begin{align*}
  \delta_1(f)\equiv\d_u(f) = \frac{1}{i\hbar}[f,V]\qquad
  \delta_2(f)\equiv\d_v(f) = \frac{1}{i\hbar}[f,U]
\end{align*}
satisfying $[\d_u,\d_v]=0$. We shall also use a complementary
description given by the complex combination $\Lambda = U+iV$, giving
$[\Lambda,\Lambda^\ast]=2\hbar\mid$, and set
\begin{align*}
  \d_1 = \d = \tfrac{1}{2}\paraa{\d_u - i\d_v}=\tfrac{1}{2\hbar}[\cdot,\Lambda^\ast]\qquad
  \d_2 = \db = \tfrac{1}{2}\paraa{\d_u + i\d_v}=\tfrac{1}{2\hbar}[\cdot,\Lambda].
\end{align*}
Let us now briefly recall how one can formulate the equations for a
minimal surface in $\reals^n$ in terms of a Poisson structure on
$\Sigma$.  Let $u,v$ denote local coordinates on a surface $\Sigma$
and let $\xv:\Sigma\to\reals^n$ be an embedding of $\Sigma$ into
$\reals^n$. The embedding is minimal if the embedding coordinates are
harmonic; i.e. $\Delta(x^i)=0$ for $i=1,\ldots,n$, where $\Delta$
denotes the Laplace-Beltrami operator with respect to the induced
metric on $\Sigma$ (cf. \cite{dhko:minimalsurfacesI}).

It is known that most of Riemannian geometry of embedded (almost)
K\"ahler manifolds can be formulated in terms of the Poisson structure
on the manifold (see \cite{ahh:multilinear,ah:pseudoRiemannian}).
For instance, assuming that the coordinates $u,v$ are isothermal,
i.e. there exists a function $\E(u,v)$ such that the metric
$g_{ab}=\E(u,v)\delta_{ab}$, and that $\{u,v\}=1$, the
Laplace-Beltrami operator on an embedded surface becomes
\begin{align*}
  \Delta(f) = \frac{1}{\E}\{\{f,u\},u\} + \frac{1}{\E}\{\{f,v\},v\}
\end{align*}
for $f\in C^\infty(\Sigma)$ (cf. \cite[Proposition 1.1]{ach:nms}).
Hence, one may formulate the conditions for an embedding (into
$\reals^n$) to be minimal in isothermal coordinates as
\begin{align*}
  &\{\{x^i,u\},u\} + \{\{x^i,v\},v\}=0\quad\text{for }i=1,\ldots,n\\
  &(\d_u\xv)\cdot(\d_u\xv) = (\d_v\xv)\cdot(\d_v\xv) \qquad
    (\d_u\xv)\cdot(\d_v\xv)=0
\end{align*}
(cf. \cite[p. 76]{dhko:minimalsurfacesI}). This characterization is
carried over to the noncommutative setting in \cite{ach:nms} as
follows (properly adapted to the current terminology).

\begin{definition}\label{def:nms}
  An embedded noncommutative manifold $\Sigma = (\Fh,\g_2,\{X^1,\ldots,X^n\})$ is
  called a \emph{noncommutative minimal surface} if
  \begin{align*}
    &\d_u^2X^i + \d_v^2X^i = 0\qquad\text{for }i=1,\ldots,n\\
    &h(e_u,e_u)=h(e_v,e_v)\equiv\E\qquad
    -h(e_u,e_v)^\ast = h(e_u,e_v) = i\F
  \end{align*}
\end{definition}

\noindent
Thus, for a noncommutative minimal surface, the metric is given by
\begin{align*}
  h_{ab} = h(e_a,e_b) =
  \begin{pmatrix}
    \E & i\F \\
    -i\F & \E
  \end{pmatrix}
\end{align*}
with $\E^\ast=\E$ and $\F^\ast=\F$. Since $\Fh$ is a field, as long as
$\E+\F\neq 0$ and $\E-\F\neq 0$, it is clear that the metric is
invertible, with inverse
\begin{align*}
  h^{-1}
  = (\E+\F)^{-1}
  \begin{pmatrix}
    \E & -i\F \\
    i\F & \E
  \end{pmatrix}
          (\E-\F)^{-1},
\end{align*}
implying that $\TSigma$ is a free module and that
$(\Fh,\g_2,\{X^1,\ldots,X^n\})$ is a regular embedded noncommutative
manifold.  Writing
\begin{align*}
  \Phi_1=\Phi=\d X=\tfrac{1}{2}(e_u-ie_v)
  \qquad \Phi_2=\Phib=\db X = \tfrac{1}{2}(e_u+ie_v).
\end{align*}
the conditions in Definition~\ref{def:nms} can be expressed as
\begin{align}\label{eq:nms.def.complex}
  \db\Phi^i = 0\qtext{and}
  h(\Phib,\Phi) = 0,
\end{align}
and the metric becomes
\begin{align*}
  H_{ab} = h(\Phi_a,\Phi_b) =
  \begin{pmatrix}
    S & 0 \\
    0 & T
  \end{pmatrix}
        =\frac{1}{2}
        \begin{pmatrix}
          \E + \F & 0 \\
          0 & \E-\F 
        \end{pmatrix}.
\end{align*}
The formulation in \eqref{eq:nms.def.complex} gives us a rather easy
way to construct examples of noncommutative minimal surfaces. Namely,
if $\Phi^i$ is a rational function in $\Lambda$ then $\db\Phi^i=0$,
and the condition $h(\Phib,\Phi)=0$ is equivalent to
\begin{align*}
  (\Phi^1)^2+(\Phi^2)^2 + \cdots + (\Phi^n)^2 = 0.
\end{align*}
For instance, a class of algebraic minimal surfaces is generated by
\begin{align*}
  \Phi=\eh_1\Phi^1+\eh_2\Phi^2+\eh_3\Phi^3 
\end{align*}
with
\begin{align*}
  \Phi^1 = (\mid-\Lambda^2)F(\Lambda)\quad
  \Phi^2 = i(\mid+\Lambda^2)F(\Lambda)\quad
  \Phi^3 = 2\Lambda F(\Lambda)
\end{align*}
for arbitrary $F(\Lambda)\in\Wh$. Note that since $\Phi^i$ is a
polynomial in $\Lambda$, one can always find hermitian $X^i\in\Wh$
such that $\d X^i=\Phi^i$. Moreover, since $\Phi^i$ and $\d\Phi^i$ are
both polynomials in $\Lambda$, they commute;
i.e. $[\Phi^i,\d\Phi^i]=0$ for $i=1,2,3$. The simplest example is
given by $F(\Lambda)=\mid$, corresponding to the classical Enneper
surface, yielding a noncommutative minimal surface
$(\Fh,\g_2,\{X^1,X^2,X^3\})$ with
\begin{align*}
  X^1 &= \Lambda-\tfrac{1}{3}\Lambda^3+\Lambda^\ast-\tfrac{1}{3}(\Lambda^\ast)^3\\
  X^2 &= i\paraa{\Lambda+\tfrac{1}{3}\Lambda^3-\Lambda^\ast-(\Lambda^\ast)^3}\\
  X^3 &= \Lambda^2+(\Lambda^\ast)^2,
\end{align*}
giving
\begin{align*}
  S &= 2\paraa{\mid+(\Lambda^\ast)^2\Lambda^2+2\Lambda^\ast\Lambda}\\
  T &= 2\paraa{\mid+\Lambda^2(\Lambda^\ast)^2+2\Lambda\Lambda^\ast}.
\end{align*}
Since a noncommutative minimal surface is a regular embedded
noncommutative manifold, the existence of a Levi-Civita connection is
guaranteed. As an illustration of the concepts in
Section~\ref{sec:embedded.noncommutative.manifolds}, let us derive an
explicit expression for a Levi-Civita connection on a noncommutative
minimal surface $\Sigma=(\Fh,\g_2,\{X^1,\ldots,X^n\})$, satisfying
$[\Phi^i,\d\Phi^i]$ for $i=1,\ldots,n$.

In terms of $\Phi,\Phib$ one may write the projection onto $\TSigma$
as
\begin{align*}
  p(U) = e_ah^{ab}h^0(e_b,U) = \Phi_aH^{ab}h^0(\Phi_b,U)
\end{align*}
giving the Levi-Civita connection in Theorem~\ref{thm:Levi.Civita}
(cf. eq \eqref{eq:levi.civita.connection}) as
\begin{align}\label{eq:nms.lcconnection.complex}
  \nabla_{\d_a}\Phi_b = \Phi_rH^{rs}h^0(\Phi_s,\d_a\Phi_b)
  +i\Phi_rH^{rs}\gammat_{a,sb}
\end{align}
for arbitrary $\gammat_{a,bc}\in\Fh$ satisfying
\begin{align*}
  \gammat_{1,bc}^\ast = \gammat_{2,cb}\qand
  \gammat_{a,bc}=\gammat_{c,ba}
\end{align*}
for $a,b,c=1,2$. The above symmetries imply that there are two
independent components of $\gammat_{a,bc}$ (e.g. $\gammat_{1,11}$ and
$\gammat_{1,21}$).

\begin{lemma}\label{lemma:h0.Phib.dPHi}
  Let $\Sigma=(\Fh,\g_2,\{X^1,\ldots,X^n\})$ be a noncommutative
  minimal surface. If $[\Phi^i,\d\Phi^i]=0$ for $i=1,\ldots,n$ then
  \begin{align*}
    h(\Phib,\d\Phi) = h(\Phi,\db\Phib)=0. 
  \end{align*}
\end{lemma}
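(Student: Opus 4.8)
The plan is to reduce both vanishing statements to a single algebraic identity, namely $\sum_i(\Phi^i)^2=0$, and then to differentiate it. The point is that the minimality condition $h(\Phib,\Phi)=0$ is, as already observed just after \eqref{eq:nms.def.complex}, equivalent to $(\Phi^1)^2+\cdots+(\Phi^n)^2=0$; once this is in hand, each of the two claimed identities is obtained by applying $\d$ (resp. $\db$) to this relation and using the commutativity hypothesis.

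First I would record the $\ast$-bookkeeping for the derivations. Since $\d_u,\d_v$ are hermitian and $\d=\tfrac12(\d_u-i\d_v)$, one has $\d^\ast=\db$, and from $\d^\ast(g)=(\d(g^\ast))^\ast$ one extracts the transport rule $(\d f)^\ast=\db(f^\ast)$. Applying this to the hermitian elements $X^i$ gives $(\Phi^i)^\ast=\Phib^i$ and $(\Phib^i)^\ast=\Phi^i$, and applying it once more gives $(\d\Phi^i)^\ast=\db\Phib^i$. Substituting $(\Phib^i)^\ast=\Phi^i$ into the definition of $\hz$ then yields $h(\Phib,\Phi)=\sum_i(\Phib^i)^\ast\Phi^i=\sum_i(\Phi^i)^2$, so that the minimal-surface hypothesis becomes exactly $\sum_i(\Phi^i)^2=0$; taking $\ast$ (equivalently, using $h(\Phi,\Phib)=h(\Phib,\Phi)^\ast$) gives the conjugate relation $\sum_i(\Phib^i)^2=0$.

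Next I would expand the two quantities in the statement in the same manner: $h(\Phib,\d\Phi)=\sum_i(\Phib^i)^\ast\d\Phi^i=\sum_i\Phi^i\d\Phi^i$ and $h(\Phi,\db\Phib)=\sum_i(\Phi^i)^\ast\db\Phib^i=\sum_i\Phib^i\db\Phib^i$. Applying the derivation $\d$ to $\sum_i(\Phi^i)^2=0$ produces $\sum_i\big((\d\Phi^i)\Phi^i+\Phi^i\d\Phi^i\big)=0$, and the hypothesis $[\Phi^i,\d\Phi^i]=0$ collapses this to $2\sum_i\Phi^i\d\Phi^i=0$, i.e. $h(\Phib,\d\Phi)=0$. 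The second identity is entirely parallel: I would apply $\db$ to $\sum_i(\Phib^i)^2=0$, which requires $[\Phib^i,\db\Phib^i]=0$; this in turn follows by taking the $\ast$ of the hypothesis $[\Phi^i,\d\Phi^i]=0$ and using $(\Phi^i)^\ast=\Phib^i$ together with $(\d\Phi^i)^\ast=\db\Phib^i$.

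All of the individual computations are routine, so the only place demanding care is the $\ast$-bookkeeping on the derivations: verifying $\d^\ast=\db$ and the induced relations $(\Phi^i)^\ast=\Phib^i$ and $(\d\Phi^i)^\ast=\db\Phib^i$. These are what let one rewrite the hermitian form as genuine products $\Phi^i\Phi^i$ and, crucially, transport the commutativity hypothesis from $\Phi^i$ to $\Phib^i$. Once these adjoint identities are established, no real obstacle remains, and both equalities in the lemma fall out of differentiating $\sum_i(\Phi^i)^2=0$ and its conjugate.
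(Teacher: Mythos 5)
Your proof is correct and takes essentially the same route as the paper: both arguments differentiate the minimality relation $h(\Phib,\Phi)=\sum_i(\Phi^i)^2=0$ and use the hypothesis $[\Phi^i,\d\Phi^i]=0$ to collapse the Leibniz expansion into $2\sum_i\Phi^i\d\Phi^i=0$. The only cosmetic difference is that the paper deduces the second vanishing from the identity $h(\Phib,\d\Phi)=h(\db\Phib,\Phi)$ together with hermiticity of $h$, whereas you rerun the same differentiation on the conjugate relation $\sum_i(\Phib^i)^2=0$ using the conjugated commutativity $[\Phib^i,\db\Phib^i]=0$.
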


\begin{proof}
  Using that $\Phi^i(\d\Phi^i)=(\d\Phi^i)\Phi^i$ one finds that
  \begin{align}\label{eq:h.Phib.dPhi}
    h(\Phib,\d\Phi)
    = \sum_{i=1}^n\Phi^i\d\Phi^i
    = \sum_{i=1}^n(\d\Phi^i)\Phi^i
    = \sum_{i=1}^n(\db\Phib^i)^\ast\Phi^i
    =h(\db\Phib,\Phi).
  \end{align}
  Since $h(\Phib,\Phi)=0$ for a noncommutative minimal surface, it
  follows that
  \begin{align*}
    0 = \d h(\Phib,\Phi)
    = h(\db\Phib,\Phi)+h(\Phib,\d\Phi)=2h(\Phib,\d\Phi)
    =2h(\db\Phib,\Phi),
  \end{align*}
  using \eqref{eq:h.Phib.dPhi}, giving
  $h(\Phib,\d\Phi) = h(\Phi,\db\Phib)=0$.
\end{proof}

\begin{proposition}
 Let $\Sigma=(\Fh,\g_2,\{X^1,\ldots,X^n\})$ be a noncommutative
 minimal surface such that $[\Phi^i,\d\Phi^i]=0$ for $i=1,\ldots,n$. Then
 \begin{equation}\label{eq:nms.lc.connection}
   \begin{split}
     \nabla_{\d}\Phi &= \Phi S^{-1}\d S + i\Phi S^{-1}\gammat_1+i\Phib T^{-1}\gammat_2\\
     \nabla_{\db}\Phib &= \Phib T^{-1}\db T + i\Phi S^{-1}\gammat_2^\ast+i\Phib T^{-1}\gammat_1^\ast\\
     \nabla_{\d}\Phib
     &= i\Phi S^{-1}\gammat_1^\ast + i\Phib T^{-1}\gammat_1
     = \nabla_{\db}\Phi
   \end{split}   
 \end{equation}
 defines a Levi-Civita connection on $(\TSigma,h)$ for arbitrary $\gammat_1,\gammat_2\in\Fh$.
\end{proposition}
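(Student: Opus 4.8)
The plan is to specialize the general Levi-Civita connection formula \eqref{eq:nms.lcconnection.complex} to the complex frame $\{\Phi,\Phib\}=\{\Phi_1,\Phi_2\}$, using the block-diagonal structure of the metric $H_{ab}=\operatorname{diag}(S,T)$ (so that $H^{11}=S^{-1}$, $H^{22}=T^{-1}$, and the off-diagonal inverse entries vanish), together with the fact that $\g_2$ is abelian so that $[\d,\db]=0$ and no connection-coefficient torsion terms appear. My first task is to compute the three ``metric-derivative'' terms $\Phi_rH^{rs}h^0(\Phi_s,\d_a\Phi_b)$ explicitly for the three relevant pairs $(\d_a,\Phi_b)\in\{(\d,\Phi),(\db,\Phib),(\d,\Phib)\}$; the remaining case $(\db,\Phi)$ will coincide with $(\d,\Phib)$ by torsion freeness since $\d\Phib=\db\Phi$ (both equal $\tfrac14(\d_u+i\d_v)(\d_u-i\d_v)X=\tfrac14(\d_u^2+\d_v^2)X=0$ on a minimal surface, but more simply $\d\Phib=\db\Phi$ as mixed second derivatives commute).

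\medskip

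The crucial simplification comes from Lemma~\ref{lemma:h0.Phib.dPHi}. I would observe that $h^0(\Phi_s,\d_a\Phi_b)=h(\Phi_s,\d_a\Phi_b)$ when both arguments lie in $\TSigma$, and then evaluate these inner products. For $\nabla_{\d}\Phi$: the term $h(\Phi,\d\Phi)$ is handled by noting $\d h(\Phib,\Phi)$-type relations, but more directly I compute $h^0(\Phi,\d\Phi)=\tfrac12\d S$ (from $\d h(\Phi,\Phi)=\d S$ combined with the holomorphicity $\db\Phi^i=0$, which forces $h(\Phi,\d\Phi)=\d S$ up to the factor accounting for the $\ast$ in the first slot), while $h^0(\Phib,\d\Phi)=h(\Phib,\d\Phi)=0$ vanishes by Lemma~\ref{lemma:h0.Phib.dPHi}. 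Thus only the $S^{-1}$-component survives in the metric term of $\nabla_\d\Phi$, producing $\Phi S^{-1}\d S$. For $\nabla_{\db}\Phib$ I would use the conjugate relation $h(\Phi,\db\Phib)=0$ (also from the lemma) and $h^0(\Phib,\db\Phib)$ relating to $\db T$, yielding $\Phib T^{-1}\db T$. For the mixed term $\nabla_\d\Phib$, both $h^0(\Phi,\d\Phib)$ and $h^0(\Phib,\d\Phib)$ must be computed; using $\d\Phib=\db\Phi$ and the vanishing results from the lemma, the pure metric contributions drop out, leaving only the $\gammat$-terms.

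\medskip

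The main obstacle I anticipate is bookkeeping the adjoint and index-symmetry conventions for $\gammat_{a,bc}$ correctly, since the complex frame is not self-adjoint ($\Phi^\ast$ is associated with $\Phib$), so the hermiticity condition reads $\gammat_{1,bc}^\ast=\gammat_{2,cb}$ rather than $\gammat_{a,bc}^\ast=\gammat_{a,cb}$. I would introduce the two free parameters $\gammat_1:=\gammat_{1,11}$ and $\gammat_2:=\gammat_{1,21}$ (as suggested in the remark preceding the statement) and carefully translate the full symmetric array $\{\gammat_{a,bc}\}_{a,b,c=1,2}$ in terms of these two elements and their adjoints, respecting both $\gammat_{1,bc}^\ast=\gammat_{2,cb}$ and $\gammat_{a,bc}=\gammat_{c,ba}$. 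Substituting $i\Phi_rH^{rs}\gammat_{a,sb}=i\Phi S^{-1}\gammat_{a,1b}+i\Phib T^{-1}\gammat_{a,2b}$ into each case and expressing the four needed coefficients $\gammat_{a,sb}$ via $\gammat_1,\gammat_2,\gammat_1^\ast,\gammat_2^\ast$ then reproduces the three displayed formulas in \eqref{eq:nms.lc.connection}.

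\medskip

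Finally, since every $\gammat_{a,bc}$ built from $\gammat_1,\gammat_2$ by the above dictionary automatically satisfies the required symmetry and hermiticity constraints, Theorem~\ref{thm:Levi.Civita} guarantees that \eqref{eq:nms.lc.connection} is metric and torsion free for \emph{arbitrary} $\gammat_1,\gammat_2\in\Fh$; no further verification of the Levi-Civita property is needed beyond exhibiting that the chosen $\gammat_{a,bc}$ meet the theorem's hypotheses. I would close by remarking that the $\gammat=0$ choice recovers the canonical Levi-Civita connection, consistent with the general existence statement.
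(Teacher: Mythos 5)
Your proposal follows essentially the same route as the paper's proof: specialize the general formula \eqref{eq:nms.lcconnection.complex} to the frame $\{\Phi,\Phib\}$, exploit the block-diagonal form of $H_{ab}=\diag(S,T)$, kill the cross terms $h(\Phib,\d\Phi)$ and $h(\Phi,\db\Phib)$ via Lemma~\ref{lemma:h0.Phib.dPHi}, identify $h^0(\Phi,\d\Phi)$ with $\d S$ and $h^0(\Phib,\db\Phib)$ with $\db T$, and translate the array $\gammat_{a,bc}$ into the two free parameters $\gammat_1=\gammat_{1,11}$, $\gammat_2=\gammat_{1,21}$ using the symmetries $\gammat_{1,bc}^\ast=\gammat_{2,cb}$ and $\gammat_{a,bc}=\gammat_{c,ba}$, with Theorem~\ref{thm:Levi.Civita} supplying the Levi-Civita property.

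One step needs correction: your claim $h^0(\Phi,\d\Phi)=\tfrac{1}{2}\d S$ is wrong, and it contradicts both the parenthetical remark in the same sentence and the final formula you claim to produce. Since $\d^\ast=\db$ and $\db\Phi^i=0$, the Leibniz expansion $\d S=\d h^0(\Phi,\Phi)=h^0(\db\Phi,\Phi)+h^0(\Phi,\d\Phi)$ loses its first term entirely, so $h^0(\Phi,\d\Phi)=\d S$ with no factor of $\tfrac{1}{2}$; the factor-of-two intuition from the real symmetric case $\d g(X,X)=2g(\nabla_{\d}X,X)$ fails here precisely because holomorphicity makes the two Leibniz terms unequal (one of them vanishes). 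The displayed result $\nabla_{\d}\Phi=\Phi S^{-1}\d S+\cdots$ requires $\d S$, not $\tfrac{1}{2}\d S$, and likewise $h^0(\Phib,\db\Phib)=\db T$ exactly. A smaller point: for the mixed term $\nabla_{\d}\Phib=\nabla_{\db}\Phi$ you invoke ``the vanishing results from the lemma,'' but the lemma concerns $h(\Phib,\d\Phi)$ and $h(\Phi,\db\Phib)$, not $\d\Phib$; what actually kills the metric contributions there is $\d\Phib=\db\Phi=0$, which follows from minimality and which you correctly established in your first paragraph, so no appeal to the lemma is needed for that case.
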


\begin{proof}
  Let us denote the two independent components of $\gammat_{a,bc}$ by
  $\gammat_1=\gammat_{1,11}$ and $\gammat_2=\gammat_{1,21}$.  Since
  $\db\Phi^i=\d(\Phi^i)^\ast=0$ one finds that
  \begin{align*}
    \d S
    &= \d h^0(\Phi,\Phi)
    = \sum_{i=1}^n\d\paraa{(\Phi^i)^\ast\Phi^i}
    = \sum_{i=1}^n(\Phi^i)^\ast\d\Phi^i = h^0(\Phi,\d\Phi)\\
    \db T
    &= \db h^0(\Phib,\Phib)
      = \sum_{i=1}^n\db\paraa{\Phi^i(\Phi^i)^\ast}
      = \sum_{i=1}^n\Phi^i\db(\Phi^i)^\ast
      = h^0(\Phib,\db\Phib).
  \end{align*}
  Using $h(\Phib,\d\Phi) = h(\Phi,\db\Phib)=0$ (cf.
  Lemma~\eqref{eq:h.Phib.dPhi}) together with
  \eqref{eq:nms.lcconnection.complex}, one obtains
  \begin{align*}
    \nabla_{\d}\Phi
    &=\Phi_rH^{rs}h^0(\Phi_s,\d\Phi)+i\Phi_rH^{rs}\gammat_{1,s1}\\
    &= \Phi S^{-1}h^0(\Phi,\d\Phi)
    +i\Phi S^{-1}\gammat_{1,11}+i\Phib T^{-1}\gammat_{1,21}\\
    &=\Phi S^{-1}\d S +i\Phi S^{-1}\gammat_1+i\Phib T^{-1}\gammat_{2}
  \end{align*}
  and
  \begin{align*}
    \nabla_{\db}\Phib
    &=\Phi_rH^{rs}h^0(\Phi_s,\db\Phib)+i\Phi_rH^{rs}\gammat_{2,s2}\\
    &=\Phib T^{-1}h^0(\Phib,\db\Phib)
      +i\Phi S^{-1}\gammat_{2,12}+i\Phib T^{-1}\gammat_{2,22}\\
    &=\Phib T^{-1}\db T + i\Phi S^{-1}\gammat_{2}^\ast + i\Phib T^{-1}\gammat_{1}^\ast
  \end{align*}
  as well as
  \begin{align*}
    \nabla_{\db}\Phi = \nabla_{\d}\Phib
    &= i\Phi S^{-1}\gammat_{1,12} + i\Phib T^{-1}\gammat_{1,22}
    = i\Phi S^{-1}\gammat_1^\ast + i\Phib T^{-1}\gammat_1,
  \end{align*}
  proving the formulas in \eqref{eq:nms.lc.connection}.
\end{proof}

\noindent
In the particularly simple case when $\gammat_1=\gammat_2=0$ one
obtains
\begin{align*}
  \nabla_{\d}\Phi = \Phi S^{-1}\d S\qquad
  \nabla_{\db}\Phib = \Phib T^{-1}\db T\qquad
  \nabla_{\d}\Phib = \nabla_{\db}\Phi = 0
\end{align*}
as well as
\begin{align*}
  R(\d,\db)\Phi
  &= \nabla_{\d}\nabla_{\db}\Phi-\nabla_{\db}\nabla_{\d}\Phi
    =-\Phi\db\paraa{S^{-1}\d S}\\
  R(\d,\db)\Phib
  &= \nabla_{\d}\nabla_{\db}\Phib-\nabla_{\db}\nabla_{\d}\Phib
    =\Phib\d\paraa{T^{-1}\db T}.
\end{align*}

\section*{Acknowledgements}

\noindent
J.A. is supported by the Swedish Research Council grant no. 2017-03710.

\bibliographystyle{alpha}
\bibliography{references}  

\end{document}